\pgfplotsset{compat=newest}
\newcommand{\citep}{\cite}
\newcolumntype{M}[1]{>{\centering\arraybackslash}m{#1}}
\newtheorem{remark}{Remark}
\newtheorem{lemma}{Lemma}
\newtheorem{theorem}{Theorem}
\numberwithin{equation}{section}
\crefname{subsection}{Subsection}{Subsections}
\crefname{figure}{Figure}{Figures}
\newcommand{\com}{%
    \mathrel{\mathrlap{{\mspace{4mu}\lhook}}{\hookrightarrow}}}
\newcommand{\dd}{\textup{d}}
\newcommand{\dx}{\,\textup{d}x}
\newcommand{\ds}{\,\textup{d}s}
\newcommand{\ddt}{\frac{\dd}{\dd t}}
\newcommand{\R}{\mathbb{R}}
\newcommand{\p}{\partial}
\newcommand{\pt}{\partial_t}
\newcommand{\pta}{{}^\textup{C}\!\p_t^\alpha}
\newcommand{\Dta}{{}^\textup{RL}\!\p_t^\alpha}
\newcommand{\Dtb}{{}^\textup{RL}\p_t^{1-\alpha}}
\newcommand{\ga}{g_{\alpha}}
\newcommand{\gb}{g_{1-\alpha}}
\DeclareFontFamily{U}{bbold}{}
\DeclareFontShape{U}{bbold}{m}{n}
{
	<-5.5> s*[1.069] bbold5
	<5.5-6.5> s*[1.069] bbold6
	<6.5-7.5> s*[1.069] bbold7
	<7.5-8.5> s*[1.069] bbold8
	<8.5-9.5> s*[1.069] bbold9
	<9.5-11> s*[1.069] bbold10
	<11-15> s*[1.069] bbold12
	<15-> s*[1.069] bbold17
}{}
\def\signed #1{{\leavevmode\unskip\nobreak\hfil\penalty50\hskip2em
		\hbox{}\nobreak\hfil(#1)%
		\parfillskip=0pt \finalhyphendemerits=0 \endgraf}}
\newsavebox\mybox
\begin{document}
\title[A time-fractional Fisher--KPP equation for tumor growth]{A time-fractional Fisher--KPP equation for tumor growth: Analysis and numerical simulation}
\author[Marvin Fritz]{Marvin Fritz$^{1}$}
\author[Nikos Kavallaris]{Nikos Kavallaris$^{2}$}
\footnotetext[1]{Computational Methods for PDEs, Radon Institute for Computational and Applied Mathematics, Linz, Austria}
\footnotetext[2]{Department of Mathematics and Computer Science, Karlstad University, Sweden}

\begin{center}
    \bf \MakeUppercase{A time-fractional Fisher--KPP equation for tumor growth: Analysis and numerical simulation}
\end{center}

\begin{center}
   \sc Marvin Fritz$^{1}$,  Nikos I. Kavallaris$^{2}$
\end{center}

\begin{quote} \small
\textsc{Abstract.}
We study a time-fractional Fisher--KPP equation involving a Riemann--Liouville fractional derivative acting on the diffusion term, as derived by Angstmann and Henry (Entropy, 22:1035, 2020). The model captures memory effects in diffusive population dynamics and serves as a framework for tumor growth modeling.
We first establish local well-posedness of weak solutions. The analysis combines a Galerkin approximation with a refined a priori estimate based on a Bihari--Henry--Gronwall inequality, addressing the nonlinear coupling between the fractional diffusion and the reaction term. For small initial data, we further prove global well-posedness and asymptotic stability.
A numerical method based on a nonuniform convolution quadrature scheme is then proposed and validated. Simulations demonstrate distinct dynamical behaviors compared to conventional formulations, emphasizing the physical consistency of the present model in describing tumor progression.

    \textsc{Keywords.} Time-fractional Fisher--KPP equation; Well-posedness; Galerkin approximation; Nonlinear Science; Numerical Simulation; Fractional diffusion; Nonuniform convolution quadrature scheme\smallskip

    \textsc{MSC.} 35A01; 35R11; 65M12; 65M60; 92C50 
\end{quote}

\section{Introduction}
The Fisher--KPP equation is a fundamental model in the theory of reaction-diffusion equations, originally introduced to describe the spread of advantageous genes in a population \cite{fisher1937wave}. Beyond population dynamics, it has found applications in fields as diverse as ecology \cite{shigesada1997biological,simpson2024fisher} and tumor growth \cite{browning2019bayesian,everett2020tutorial}, providing a paradigmatic example of nonlinear spatiotemporal dynamics in complex biological systems.

In recent years, there has been growing interest in nonlocal extensions of reaction–diffusion equations \cite{metzler2000random,KavallarisSuzuki2018,KavallarisLatosSuzuki2023,del2013fractional,henry2006anomalous}, motivated by the need to describe anomalous transport phenomena, memory effects, and long-range interactions that classical local diffusion models fail to capture. Among these approaches, time-fractional derivatives have emerged as a powerful framework for modeling subdiffusive dynamics, where the mean squared displacement grows sublinearly in time due to mechanisms such as trapping, crowding, or viscoelastic damping \cite{klages2008anomalous,angstmann2020time,mainardi2010fractional}. Such effects are widely observed in biological tissues, porous and heterogeneous media, and complex fluids, where transport processes deviate markedly from Fickian behavior \cite{timsina2021mathematical,benson2000application,magin2006fractional}. The resulting nonlocal formulations thus provide a more faithful representation of diffusion–reaction processes in complex systems.

A key challenge in extending the Fisher–KPP equation to the fractional setting lies in identifying a physically consistent mathematical formulation. While many studies simply replace the classical time derivative with a Caputo derivative \cite{huy2024global,rahby2025asymptotic}, this approach does not always reflect the underlying stochastic mechanisms responsible for anomalous diffusion. As shown in \cite{angstmann2020time} and further discussed in \cite{fritz2024analysis,heinsalu2007use,weron2008modeling,angstmann2015generalized} for related Fokker--Planck models, a derivation based on continuous-time random walks and subordination arguments naturally leads to a formulation in which the Riemann–Liouville fractional derivative acts on the diffusion term. This distinction is not merely technical: as demonstrated in this work, the two formulations can exhibit markedly different qualitative behavior, both analytically and numerically.

The main contributions of this paper are twofold. First, we provide a rigorous analysis of the time-fractional Fisher–KPP equation with a Riemann–Liouville derivative in the diffusion term, establishing local and global well-posedness of weak solutions. The proof combines a Galerkin approximation with a priori estimates obtained via a Bihari–Henry–Gronwall inequality, extending techniques developed for nonlinear time-fractional PDEs \cite{fritz2021equivalence,fritz2021subdiffusive,fritz2021timefractional,fritz2024analysis,fritz2024well}. Second, we develop a nonuniform convolution quadrature scheme for numerical approximation and compare the dynamics of the physically consistent model with those of the conventional Caputo formulation. The results highlight the importance of model consistency for accurately capturing the intrinsic dynamics of subdiffusive reaction–diffusion systems, particularly in the context of tumor growth and progression.

The paper is organized as follows. In \cref{Sec:Modeling}, we review the derivation of the time-fractional Fisher–KPP equation and discuss the physical interpretation of different formulations. \cref{ap} introduces the main notation and presents several preliminary analytical results. \cref{Sec:Analysis} contains the main analytical theorems, while \cref{Sec:Numerics} reports numerical simulations that illustrate the qualitative differences between models. 

\section{Modeling} \label{Sec:Modeling}

The classical Fisher--KPP equation describes diffusive population dynamics or the evolution of tumor cell density with logistic growth. When incorporating memory effects through fractional derivatives, careful consideration must be given to the appropriate mathematical formulation. In population dynamics, one typically assumes a constant per capita birth rate and no mortality, which leads to logistic reaction kinetics. As derived in \cite[Eq.~(32)]{angstmann2020time} using subdiffusive processes with reaction terms, the physically consistent model involves the Riemann--Liouville derivative $\Dtb$ of order \(1-\alpha\), with \(\alpha \in (0,1)\) (see \cref{Def:RL} for the precise definition). This derivative acts on the diffusion term, yielding
\begin{equation} \label{Eq:OriginalFisher}
    \partial_t u = D \Delta \Dtb u + ru(1-u),
\end{equation}
where \(D>0\) is the diffusion coefficient and \(r>0\) denotes the intrinsic growth rate.  
The nonlinear term \(ru(1-u)\) represents logistic growth. In the context of tumor dynamics, regions fully occupied by tumor cells (\(u=1\)) or free of them (\(u=0\)) show no further proliferation, while intermediate concentrations (\(u \approx 0.5\)) correspond to maximal growth. More generally, the reaction term drives the system toward the carrying capacity (\(u=1\)).

Our analysis allows for either homogeneous Dirichlet or Neumann boundary conditions. Dirichlet conditions prescribe the population at the boundary—often zero, representing a hostile or cell-free region—while homogeneous Neumann conditions correspond to impermeable boundaries, ensuring no flux of cells across the domain boundary. Biologically, this models a closed tissue region in which cells diffuse and proliferate but cannot escape. In what follows, we adopt this framework on a bounded domain \( \Omega \subset \mathbb{R}^d \).

By convolving \cref{Eq:OriginalFisher} with the kernel \( \gb \) and using the relations \( \pta u=\gb*\pt u \) and \( \gb * \Dtb u = u \) (see \cref{Eq:InverseConvolution}), we obtain the equivalent formulation
\begin{equation} \label{Eq:Fisher}
    \pta u = D\Delta u + r\,\gb*(u-u^2).
\end{equation}
This form is more convenient for analysis since the Caputo derivative naturally incorporates the initial condition. We establish local existence of weak solutions for short times, although it remains difficult to guarantee \(u\in[0,1]\) when \(u_0\in[0,1]\), due to the convolution structure of the source term and the absence of a weak comparison principle. This issue is common in nonlocal PDEs, even for linear equations of the type \(\pt u = \Delta u + Ku\) with a nonlocal operator \(K\); see \cite[Chapter~V]{quittner2019superlinear} and \cite[Chapter~1]{KavallarisSuzuki2018}.

A common but not physically consistent formulation appearing in the literature (see, e.g., \cite{huy2024global,rahby2025asymptotic}) replaces the time derivative in \cref{Eq:OriginalFisher} with a Caputo derivative:
\begin{equation} \label{Eq:WrongModel}
    \pta u = D\Delta u + r(u-u^2).
\end{equation}
Although mathematically tractable and admitting a comparison principle, this equation does not correctly represent the underlying stochastic mechanisms, as emphasized in \cite{angstmann2020time}. Numerical experiments in \cref{Sec:Numerics} reveal clear differences between \cref{Eq:Fisher} and \cref{Eq:WrongModel}, particularly in tumor-growth scenarios. From a modeling standpoint, fractional derivatives arising from Langevin dynamics and subordination arguments are naturally associated with the diffusion operator rather than the reaction term. This interpretation is consistent with the time-fractional Fokker--Planck equation, where \cite{heinsalu2007use} showed that Caputo-type formulations are ``physically defeasible,'' meaning their solutions do not correspond to valid stochastic processes. Both models, \cref{Eq:Fisher} and \cref{Eq:WrongModel}, reduce to the classical Fisher--KPP equation in the limit \(\alpha = 1\). A brief remark in \cref{Sec:Analysis} indicates how global well-posedness for the Caputo-type model can be established via a weak comparison principle.

From a structural viewpoint, the classical Fisher--KPP equation (\(\alpha=1\)) possesses a gradient-flow structure:  
\[
\pt u = -\nabla_{L^2} E(u),
\quad
E(u)=\int_\Omega \frac{D}{2}|\nabla u|^2 - \frac{r}{2}u^2 + \frac{r}{3}u^3 \,\dx.
\]
The Caputo-based model \cref{Eq:WrongModel} can be interpreted as a \emph{time-fractional gradient flow} in the sense of \cite{fritz2021equivalence}, whereas the physically consistent model \cref{Eq:Fisher} cannot be directly cast in that framework.  

Another key structural property arises from \emph{mass balance}, obtained by testing the variational formulation with \(1\):
\[
\ddt \int_\Omega u(x)\,\dx = \int_\Omega r\,u(1-u)\,\dx,
\]
which holds for the classical Fisher--KPP equation under Neumann boundary conditions as well as for its time-fractional counterpart \cref{Eq:OriginalFisher}.

\section{Analytical Preliminaries}\label{ap}

We briefly collect the relevant preliminary results on fractional operators and the notation used throughout this work. 

\subsection{Notation}
The constant \(C\) may change from line to line, and we write \(\lesssim\) for \(\leq C\) whenever the constant \(C\) is not essential. We omit the domain \(\Omega\) when writing function spaces, e.g., \(L^2 := L^2(\Omega)\) and \(H^1 := H^1(\Omega)\). Inner products on a Banach space \(X\) are denoted by \((\cdot,\cdot)_X\), and duality pairings with its dual \(X'\) by \(\langle \cdot, \cdot \rangle_X\). In particular, \(H^{-1} = (H^1)'\). For Bochner spaces, we abbreviate the norm \(\|\cdot\|_{L^p(X)}\) for \(L^p(0,T;X)\), where \(T < \infty\) and \(p \in [1,\infty]\).

\subsection{Sonine kernels}
Sonine kernels play a central role in fractional calculus. We define the singular kernel function \( g_\alpha(t) = t^{\alpha - 1} / \Gamma(\alpha) \) for \( t \in (0, T) \) and \( \alpha > 0 \), where \( \Gamma \) denotes the Gamma function. The kernel satisfies \( g_\alpha \in L^p(0, T) \) for all \( \alpha > 1 - \tfrac{1}{p} \). Furthermore, it fulfills the convolution identity (see \cite[Theorem~2.4]{diethelm2010analysis})
\begin{equation} \label{Eq:Semigroup}
	g_\alpha * g_\beta = g_{\alpha + \beta}, 
	\qquad \forall\, \alpha, \beta \in (0, 1),
\end{equation}
known as the \emph{Sonine property}.  

We introduce the space of bounded convolutions
\[
L^p_\alpha(0,T) = \big\{ u \in L^1(0,T): \|u\|_{L^p_\alpha} := \sup_{t \in (0,T)} (\ga * |u|^p)(t) < \infty \big\}.
\]
It holds \(L^p_\alpha(0,T) \subset L^p(0,T)\) due to
\begin{equation}\begin{aligned} 
\|u\|_{L^p(0,t)}^p 
&\leq t^{1-\alpha} \int_0^t (t-s)^{\alpha-1} |u(s)|^p \,\mathrm{d}s 
\leq T^{1-\alpha} \Gamma(\alpha) (\ga * |u|^p)(t).	
\end{aligned} 
\label{Eq:KernelNorm}
\end{equation}
For a Banach space \(X\), we equip \(L^p_\alpha(0,T;X)\) with the norm \(\|\cdot\|_{L^p_\alpha(X)}\).

\subsection{Fractional derivatives}
We next recall the definitions of fractional derivatives in the sense of Riemann--Liouville and Caputo.  
The Riemann--Liouville derivative is defined by
\begin{equation}\label{Def:RL}
\Dta u = \pt(\gb * u),
\end{equation}
and the Caputo derivative is given in terms of the Riemann--Liouville derivative as \(\pta u = \Dta(u - u_0)\).  
Equivalently, if \(u\) is absolutely continuous, one has \(\pta u = \gb * \pt u\); see \cite[Lemma~3.5]{diethelm2010analysis}.  

We define the fractional Sobolev--Bochner space for \(\alpha \in (0,1)\) on \((0,T)\) with values in a Hilbert space \(H\) by
\[
W^{\alpha,p}(0,T;H) = \big\{u \in L^p(0,T;H) : \pta u \in L^{p}(0,T;H)\big\},
\]
equipped with the norm
\[
\|u\|_{W^{\alpha,p}(H)}^p = \|u\|_{L^p(H)}^p + \|\pta u \|_{L^p(H)}^p 
= \int_0^T \|u(s)\|_H^p \,\mathrm{d}s + \int_0^T \|\pta u(s)\|_H^p \,\mathrm{d}s.
\]
Analogously to the integer-order setting, there are continuous and compact embeddings for fractional Sobolev spaces. For a given Gelfand triple \(V \com H \hookrightarrow V'\), it holds \cite[Theorem~3.2]{wittbold2021bounded}
\begin{equation}\label{Eq:aubinfractional}  
W^{\alpha,p}(0,T;V') \cap L^p(0,T;V) \com L^p(0,T;H),
\quad p \in [1,\infty).
\end{equation} 
This generalizes the classical Aubin--Lions lemma to the fractional setting.

An important operational identity is the inverse relation between convolution with \(g_\alpha\) and the Caputo derivative. In fact,
\begin{align} \label{Eq:InverseConvolution}
	(\ga * \pta u)(t) &= u(t) - u_0, \qquad \forall\, u \in W^{\alpha,p}(0,T;H).
\end{align} 
Indeed,
\[
(\ga * \pta u)(t) = (\ga * \gb * \pt u)(t) = (1 * \pt u)(t)
= \int_0^t \pt u(s)\,\mathrm{d}s = u(t) - u_0,
\]
where we used \eqref{Eq:Semigroup} to conclude \(\ga * \gb = g_1 = 1\).  
Moreover, the interaction between fractional derivatives and kernel functions yields
\begin{equation} \label{Eq:DerivativeofKernel}  
\pta (\ga * u) = \Dta (\ga * u) = \pt (\gb * \ga * u) = \pt (1 * u) = u,
\quad \forall\, u \in L^1(0,T;H).
\end{equation}

The classical chain rule does not hold for fractional derivatives, but the following inequality, due to Alikhanov \cite{alikhanov2010priori}, serves as a useful substitute (see also \cite[Theorem~2.1]{vergara2008lyapunov}):
\begin{equation*}
\frac12 \pta \|u\|_H^2  \leq (u, \pta u)_H,
\quad \forall\, u \in W^{\alpha,p}(0,T;H),
\end{equation*}
for almost all \(t \in (0,T)\). This inequality can be generalized for convex functions \(G\): if \(G \in C^1\) is convex and satisfies \(G(u), G'(u) \in H\) for \(u \in H\) a.e., then \cite{fritz2021timefractional} 
\begin{equation} \label{Eq:Alikhanov}  
\pta (G(u),1)_H  \leq (G'(u), \pta u)_H,
\quad \forall\, u \in W^{\alpha,p}(0,T;H),
\end{equation}
where the case \(G(u) = \tfrac12 u^2\) recovers Alikhanov’s inequality.

\subsection{Gronwall-type inequalities}
We conclude this section with integral inequalities that will be essential for establishing a priori estimates in the analysis below. In particular, we employ Gronwall-type inequalities that accommodate convolution terms on the right-hand side. Such results are often referred to as Henry--Gronwall inequalities.  
Owing to the nonlinear nature of the Fisher--KPP model, we further require a power-type nonlinearity, leading to the so-called Bihari--Gronwall inequalities. When combined with convolution operators, these yield what are termed Henry--Gronwall--Bihari inequalities; see \cite{ouaddah2021fractional} for details.  

\begin{lemma}[Henry--Gronwall--Bihari, cf.~\cite{ouaddah2021fractional}] \label{Lem:Bihari}
Let \(c > 0\), \(u, f \in C^0([0, T];\mathbb{R}_{\geq 0})\) for some \(T>0\), and \(\psi \in C^0(\mathbb{R}_{\geq 0};\mathbb{R}_{\geq 0})\) be nondecreasing with \(\psi(0) = 0\). If
\[
u(t) \leq c + \int_0^t (t-s)^{\alpha-1} f(s) \psi(u(s)) \,\mathrm{d}s,
\]
then
\[
u(t) \leq \left[ \Psi^{-1} \left( \frac{2^q T q (p(\alpha-1)+1)}{(p(\alpha-1)+1)^q} \int_0^t f^q(s) \,\mathrm{d}s \right) \right]^{1/q}, 
\quad t \in [0, T],
\]
where
\[
\Psi(z) = \int_{2^q c^q}^z \frac{\mathrm{d}x}{(\psi(x^{1/q}))^q}, \quad 
z \geq 2^q c^q, \quad p\!\left(1 - \frac{1}{q}\right) = 1, \quad q > \frac{1}{\alpha}.
\]
\end{lemma}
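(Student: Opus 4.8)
The plan is to reduce the convolution inequality to a form where a classical Bihari-type argument applies, with the key device being a careful use of Hölder's inequality to separate the singular kernel from the nonlinear factor. Setting up the exponents so that $p(1-\tfrac1q)=1$ with $q>\tfrac1\alpha$ is precisely what guarantees that the singular kernel $(t-s)^{\alpha-1}$ lands in $L^p$: since $q>\tfrac1\alpha$ forces $p(\alpha-1)>-1$, the integral $\int_0^t (t-s)^{p(\alpha-1)}\,\mathrm{d}s$ converges and evaluates to $t^{p(\alpha-1)+1}/(p(\alpha-1)+1)$.

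First I would apply Hölder's inequality with conjugate exponents $p$ and $q$ to the convolution integral, writing
\[
\int_0^t (t-s)^{\alpha-1} f(s)\psi(u(s))\,\mathrm{d}s
\leq \left(\int_0^t (t-s)^{p(\alpha-1)}\,\mathrm{d}s\right)^{1/p}
\left(\int_0^t \big(f(s)\psi(u(s))\big)^q\,\mathrm{d}s\right)^{1/q}.
\]
Evaluating the first factor gives $\big(t^{p(\alpha-1)+1}/(p(\alpha-1)+1)\big)^{1/p}$, which I would bound by a constant times a power of $T$. Next I would raise the resulting inequality $u(t)\le c + (\text{kernel factor})\cdot(\dots)^{1/q}$ to the $q$-th power and use the elementary convexity bound $(a+b)^q\le 2^{q-1}(a^q+b^q)$ (or $2^q$ to be safe) to obtain
\[
u(t)^q \leq 2^q c^q + 2^q \frac{t^{p(\alpha-1)+1}}{(p(\alpha-1)+1)^{q/p}}
\int_0^t f^q(s)\,\psi(u(s))^q\,\mathrm{d}s,
\]
so that $w:=u^q$ now satisfies an ordinary (non-singular) integral inequality of Bihari type with the monotone nonlinearity $z\mapsto \psi(z^{1/q})^q$ acting on $w$.

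At this stage the mechanism is the standard Bihari--LaSalle lemma: I would define $\Psi(z)=\int_{2^qc^q}^z \frac{\mathrm{d}x}{\psi(x^{1/q})^q}$, observe that $\Psi$ is strictly increasing on its domain (since $\psi$ is nondecreasing with $\psi(0)=0$ and the integrand is positive for $x$ beyond the point where $\psi$ vanishes), and differentiate $\Psi(w(t))$ along the inequality to get $\frac{\mathrm{d}}{\mathrm{d}t}\Psi(w(t))\le 2^q\,(\text{kernel const})\,f^q(t)$. Integrating from $0$ to $t$ and inverting $\Psi$ yields the stated bound, after absorbing the factors $q$, $(p(\alpha-1)+1)$ and $T$ into the argument of $\Psi^{-1}$ to match the prefactor $\frac{2^q T q(p(\alpha-1)+1)}{(p(\alpha-1)+1)^q}$.

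The main obstacle is bookkeeping the exponents and constants so that the singular-kernel factor, the $2^q$ from the convexity step, and the normalization of $\Psi$ all combine into exactly the displayed constant $\frac{2^q T q(p(\alpha-1)+1)}{(p(\alpha-1)+1)^q}$; in particular one must track how $(p(\alpha-1)+1)^{1/p}$ from Hölder interacts with the $q$-th power and with the extra power of $T$ that arises when passing from the pointwise-in-$s$ integrand to the running integral. A secondary technical point is justifying the differentiation of $\Psi(w(t))$: since $\psi$ need not be strictly increasing and $\Psi$ is only guaranteed well-defined for arguments $\ge 2^qc^q$, I would first replace $w$ by the monotone majorant $\widetilde w(t)=2^qc^q+2^q(\text{const})\int_0^t f^q\psi(w^{1/q})^q$, which is absolutely continuous and dominates $w$, and carry out the Bihari argument on $\widetilde w$, using monotonicity of $\psi$ to close the estimate.
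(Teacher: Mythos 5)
You should first note that the paper itself contains no proof of this lemma: it is quoted directly from the cited reference \cite{ouaddah2021fractional}, and the only proof the paper supplies in this direction is that of \cref{Lem:BihariOur}, which simply applies the present statement with $f\equiv 1$ and $\psi(u)=u^3$. Judged on its own, your proposal is the standard (Medved-type) proof of Henry--Bihari inequalities, and its structure is sound: H\"older with conjugate exponents $p,q$ isolates the singular kernel, which lies in $L^p(0,t)$ precisely because $q>1/\alpha$ is equivalent to $p(\alpha-1)+1>0$; the elementary bound $(a+b)^q\le 2^q(a^q+b^q)$ converts the result into a nonsingular Bihari-type inequality for $w=u^q$ with nonlinearity $z\mapsto\psi(z^{1/q})^q$; and the Bihari--LaSalle argument on the absolutely continuous majorant $\widetilde w$, using monotonicity of $\psi$ and the normalization $\Psi(2^qc^q)=0$, gives the stated form of the conclusion. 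Your two technical remarks (differentiating $\widetilde w$ rather than $w$, and the possible vanishing of $\psi$ near the lower limit) identify the right issues and the standard fixes.

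Two bookkeeping points, neither of which breaks the argument, but which you should be aware of. First, after raising to the $q$-th power, the exponent of $t$ in the kernel factor is $(p(\alpha-1)+1)\,q/p=q\alpha-1$, not $p(\alpha-1)+1$ as written in your second display; this is harmless because $q\alpha-1>0$, so the factor is still bounded by a power of $T$. Second, carrying your constants through honestly produces the prefactor $2^q\,T^{q\alpha-1}\,(p(\alpha-1)+1)^{1-q}$ in front of $\int_0^t f^q(s)\,\mathrm{d}s$, whereas the statement displays $2^q\,T\,q\,(p(\alpha-1)+1)^{1-q}$. These are not comparable in general (for instance $T^{q\alpha-1}>Tq$ when $T$ is large and $q>2/\alpha$), so no amount of ``absorbing'' factors will reproduce the displayed constant exactly; the discrepancy is inherited from how the reference states the result. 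This is immaterial for the paper: in its only application, \cref{Lem:BihariOur}, any explicit constant of this form merely changes the size of $T_*$, not the argument.
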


In the sequel, we apply the above inequality with \(\psi(u) = u^3\), reflecting the quadratic nonlinearity of the Fisher--KPP model tested against the solution itself.

\begin{lemma}\label{Lem:BihariOur}
Let \(u \in C^0([0,T];\mathbb{R}_{\geq 0})\) for some \(T>0\), and let \(c > 0\), \(\alpha \in (0,1)\). 
If
\[
u(t) \leq c + \int_0^t (t-s)^{\alpha-1} u(s)^3 \,\mathrm{d}s,
\]  
then there exists \(T_* = T_*(c) \leq T\) such that 
\[
u(t) \leq C(T_*), \quad t \in [0, T_*].
\]  
In particular, \(c\) can be chosen sufficiently small to ensure \(T_* = T\).
\end{lemma}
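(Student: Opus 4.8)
The plan is to apply the general Henry--Gronwall--Bihari inequality (\cref{Lem:Bihari}) directly with the specialization $\psi(u)=u^3$ and $f\equiv 1$, and then analyze the resulting bound to extract a finite time horizon $T_*$. With these choices the hypothesis of \cref{Lem:Bihari} is exactly the hypothesis here, so the conclusion gives
\[
u(t) \leq \left[ \Psi^{-1}\!\left( \frac{2^q T q (p(\alpha-1)+1)}{(p(\alpha-1)+1)^q}\, t \right) \right]^{1/q},
\]
where I have used $\int_0^t f^q \ds = t$. The function $\Psi$ specializes to $\Psi(z)=\int_{2^q c^q}^{z} x^{-3} \dd x$, since $(\psi(x^{1/q}))^q = ((x^{1/q})^3)^q = x^3$. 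This integral is elementary: $\Psi(z) = \tfrac12\big( (2^q c^q)^{-2} - z^{-2}\big)$, which is \emph{bounded above} by the finite value $\tfrac12 (2^q c^q)^{-2}$ as $z\to\infty$.

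This boundedness of $\Psi$ is the crux of the argument and the point requiring the most care. Because $\Psi$ maps $[2^q c^q,\infty)$ onto the bounded interval $[0, \tfrac12(2^qc^q)^{-2})$, its inverse $\Psi^{-1}$ is only defined for arguments strictly below the threshold $M := \tfrac12 (2^q c^q)^{-2}$. Hence the conclusion of \cref{Lem:Bihari} is valid only as long as the argument of $\Psi^{-1}$ stays below $M$, i.e. as long as
\[
\frac{2^q T q (p(\alpha-1)+1)}{(p(\alpha-1)+1)^q}\, t < M = \frac{1}{2\cdot 2^{2q} c^{2q}}.
\]
I would therefore \emph{define} $T_*$ to be the largest time (capped at $T$) for which this strict inequality holds for all $t \in [0,T_*]$: solving for $t$, the threshold is $t = K / c^{2q}$ for an explicit constant $K=K(\alpha,q,T)>0$ depending only on $\alpha$, $q$, and $T$ through the prefactors. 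Thus I set $T_* = \min\{T,\, K/c^{2q}\}$, which is strictly positive and depends on $c$ as claimed.

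On $[0,T_*]$ the right-hand side of the inequality is a continuous function of $t$ bounded by its value at $t=T_*$, so taking the supremum yields a finite constant
\[
C(T_*) = \left[ \Psi^{-1}\!\left( \frac{2^q T q (p(\alpha-1)+1)}{(p(\alpha-1)+1)^q}\, T_* \right) \right]^{1/q} < \infty,
\]
since the argument is by construction below $M$ and $\Psi^{-1}$ is finite there. This establishes the uniform bound $u(t)\leq C(T_*)$ on $[0,T_*]$. For the final sentence, observe that the threshold time $K/c^{2q} \to \infty$ as $c\to 0^+$ (because $2q>0$), so for $c$ small enough we have $K/c^{2q} \geq T$ and therefore $T_* = T$; one simply takes $c \leq (K/T)^{1/(2q)}$.

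I expect the only genuine subtlety to be bookkeeping the finiteness of $\Psi$ and correctly identifying that $T_*$ is governed by the \emph{blow-up threshold} of $\Psi^{-1}$ rather than by any regularity obstruction. The superlinear power $\psi(u)=u^3$ is precisely what makes $\Psi$ bounded and forces a finite existence time, mirroring the finite-time blow-up of the ODE $y'=y^3$; this is the structural fact to emphasize. Everything else---specializing the integral, inverting $\Psi$, and monotonicity in $t$---is routine.
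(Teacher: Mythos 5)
Your proposal is correct and follows essentially the same route as the paper: both specialize \cref{Lem:Bihari} to $f\equiv 1$, $\psi(u)=u^3$, compute $\Psi$ explicitly (the paper inverts it in closed form to get $u(t)\le 2c\,(1-\mathrm{const}\cdot c^{2q}t)^{-1/(2q)}$, while you argue via the bounded range of $\Psi$), and identify $T_*$ as the blow-up threshold of $\Psi^{-1}$, capped at $T$, which scales like $c^{-2q}$ so that small $c$ yields $T_*=T$. The only cosmetic fix needed---shared by the paper, whose proof concludes with the strict inequality $t<T_*$---is to take $T_*$ strictly below the threshold $K/c^{2q}$ (e.g.\ half of it), since at the threshold itself the argument of $\Psi^{-1}$ equals its supremum $M$ and the bound degenerates, so finiteness of $C(T_*)$ on the closed interval $[0,T_*]$ requires this slight shrinking.
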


\begin{proof}
We apply \cref{Lem:Bihari} with \(f(s) = 1\), \(\psi(u) = u^3\), and \(q > \frac{1}{\alpha}\) such that \(p(1 - \frac{1}{q}) = 1\), which implies \(p = \frac{q}{q-1}\). Define  
\(\beta = p(\alpha-1) + 1 = \frac{q\alpha - 1}{q-1}\).
Then
\[
\Psi(z) = \int_{2^q c^q}^z \frac{\mathrm{d}x}{(\psi(x^{1/q}))^q}
= \int_{2^q c^q}^z x^{-3}\,\mathrm{d}x
= \tfrac12 \left( \frac{1}{4^q c^{2q}} - \frac{1}{z^2} \right),
\quad z \geq 2^q c^q.
\]
From \cref{Lem:Bihari} we obtain
\[
u(t) \leq \left[ \Psi^{-1} \!\left( \frac{2^q T q \beta}{(p(\alpha-1)+1)^q} \!\int_0^t f^q(s) \,\mathrm{d}s \!\right) \right]^{1/q}, 
\quad t \in [0, T].
\]
Since \(f(s)=1\), \(\int_0^t f^q(s)\,\mathrm{d}s = t\). Solving \(\Psi(z)=A(t)\) with
\(A(t)=2^q T q \beta^{1-q} t\),
we find
\[
\frac{1}{z^2} = \frac{1}{4^q c^{2q}} - 2^{q+1} T q \beta^{1-q} t,
\quad \text{so that} \quad
z^2 = \frac{4^q c^{2q}}{1 - 2^{3q+1} T q \beta^{1-q} c^{2q} t}.
\]
Hence,
\[
u(t) \leq \frac{2c}{\left(1 - 2^{3q+1} T q \left( \tfrac{q-1}{q\alpha - 1} \right)^{q-1} c^{2q} t \right)^{1/(2q)}},
\]
for any \(t \in [0, T]\) provided the denominator is positive, i.e.,
\[
1 - 2^{3q+1} T q \beta^{1-q} c^{2q} t > 0.
\]
Thus, the bound holds for 
\(
t < T_* := \min\!\left\{ T, 1/(2^{3q+1} T q \beta^{1-q} c^{2q}) \right\}.
\)
\end{proof}

\section{Well-Posedness Analysis} \label{Sec:Analysis}

In this section, we analyze the time-fractional Fisher--KPP equation \cref{Eq:OriginalFisher}. 
In particular, we consider its convolved formulation \cref{Eq:Fisher}
and establish the existence of a weak solution up to a finite final time \( T>0 \). 

We do not attempt to prove that the solution remains bounded between \( 0 \) and \( 1 \), since the presence of the convolved nonlinearity complicates the use of weak comparison principles. Indeed, the non-convolved source term \( u(1-u) \) is more suitable for applying such principles; see, for instance, \cite{KavallarisSuzuki2018, quittner2019superlinear} for related discussions on the maximum principle for nonlocal partial differential equations. On the other hand, when considering the original formulation, the Riemann--Liouville derivative presents additional challenges for establishing a weak comparison principle, since no chain inequality analogous to that of the Caputo derivative, cf. \cref{Eq:Alikhanov}, is available for convex functions.

Instead, we establish local well-posedness in a weak setting and demonstrate the possibility of extending the solution for sufficiently small initial data. We also include, in \cref{rem2}, a remark on the global well-posedness and on the applicability of a weak comparison principle to the alternative model \cref{Eq:WrongModel}.

For simplicity, we fix $r=D=1$ in the following analysis. We also include a source term $f$, which becomes relevant in the bootstrap argument below. Thus, we consider the time-fractional Fisher--KPP equation with forcing:
\begin{equation} \label{Eq:Fisher3}
    \pta u = \Delta u + \gb*(u-u^2) + f.
\end{equation}

\subsection{Local well-posedness}

\begin{theorem}[Local well-posedness] \label{Thm:LocalExistence}
Let $\Omega \subset \R^d$, $d \le 4$, be a bounded Lipschitz domain. 
For any $u_0 \in L^2$ and $f \in L^2_\alpha(0,T;L^{4/3})$ with $T>0$, there exists 
\[
T_* = T_*\big(u_0,\|f\|_{L^2_\alpha(L^{4/3})}\big) < T
\]
such that the problem \cref{Eq:Fisher3} admits a weak solution
\[
u \in L^\infty(0,T_*;L^2) \cap L^2(0,T_*;H^1),
\quad \gb*u \in H^1(0,T_*;H^{-1}),
\]
satisfying $(\gb*(u-u_0))(0)=0$ and, for almost every $t\in[0,T_*]$,
\begin{equation}\label{Eq:Variational}
\langle \pta u,v\rangle_{H^1} + (\nabla u,\nabla v)_{L^2}
= (\gb*(u-u^2),v)_{L^2} + (f,v)_{L^2}
\quad \forall v \in H^1.
\end{equation}
\end{theorem}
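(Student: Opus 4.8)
The plan is to build the solution by a Faedo--Galerkin scheme, derive a priori bounds that are uniform in the discretization parameter by means of the Bihari--Henry--Gronwall inequality in \cref{Lem:BihariOur}, and then pass to the limit using the fractional Aubin--Lions embedding \eqref{Eq:aubinfractional}. Concretely, I would fix an orthonormal basis $\{w_k\}_{k\in\N}$ of $L^2$ consisting of eigenfunctions of the Laplacian under the chosen (Dirichlet or Neumann) boundary conditions, which is simultaneously orthogonal in $H^1$, and seek $u_n(t)=\sum_{k=1}^n c_k^n(t)\,w_k$ solving the $n$-dimensional projection of \eqref{Eq:Variational}. This produces a coupled system of fractional ODEs for the coefficients $c_k^n$; applying $\ga\ast$ and using \eqref{Eq:InverseConvolution} recasts it as a Volterra integral system whose right-hand side is locally Lipschitz on bounded sets (the nonlinearity is polynomial and the kernels are weakly singular but integrable), so a standard fixed-point argument yields a local-in-time solution for each $n$.

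The core of the argument is the a priori estimate. I would test the Galerkin equation with $u_n$ and apply Alikhanov's inequality \eqref{Eq:Alikhanov} with $G(u)=\tfrac12 u^2$ to obtain
\[
\tfrac12\,\pta\|u_n\|_{L^2}^2+\|\nabla u_n\|_{L^2}^2
\le (\gb\ast(u_n-u_n^2),u_n)_{L^2}+(f,u_n)_{L^2},
\]
and then convolve with $\ga$, using \eqref{Eq:InverseConvolution}, to pass to the integral form
\[
\tfrac12\|u_n(t)\|_{L^2}^2+(\ga\ast\|\nabla u_n\|_{L^2}^2)(t)
\le \tfrac12\|u_0\|_{L^2}^2+\Big(\ga\ast\big[(\gb\ast(u_n-u_n^2),u_n)_{L^2}+(f,u_n)_{L^2}\big]\Big)(t).
\]
The delicate term is the convolved reaction. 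Using Hölder's inequality in space together with the $L^4$--$L^{4/3}$ duality and the Sobolev embedding $H^1\hookrightarrow L^4$ (valid precisely for $d\le 4$), I would bound $(\gb\ast u_n^2,u_n)_{L^2}$ by $\|u_n(t)\|_{L^4}\,(\gb\ast\|u_n\|_{L^{8/3}}^2)(t)$, interpolate $\|u_n\|_{L^{8/3}}$ between $L^2$ and $H^1$ via Gagliardo--Nirenberg, and invoke Young's inequality to absorb the emerging gradient factors into the dissipative term $\ga\ast\|\nabla u_n\|_{L^2}^2$ on the left. The linear convolution term and the forcing are treated analogously, the $L^2_\alpha$-regularity of $f$ supplying exactly the convolution-type control that survives the outer $\ga\ast$.

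After absorption, the estimate collapses to an inequality of the form $y(t)\le c+\int_0^t (t-s)^{\alpha-1} y(s)^3\,\mathrm ds$ with $y(t)=\|u_n(t)\|_{L^2}$ and $c$ depending only on $\|u_0\|_{L^2}$ and $\|f\|_{L^2_\alpha(L^{4/3})}$, uniformly in $n$. \cref{Lem:BihariOur} then furnishes both the bound $\|u_n(t)\|_{L^2}\le C(T_*)$ and the local existence time $T_*=T_*(u_0,\|f\|_{L^2_\alpha(L^{4/3})})$; reinserting this into the integral inequality controls $\ga\ast\|\nabla u_n\|_{L^2}^2$ and, via \eqref{Eq:KernelNorm}, bounds $u_n$ uniformly in $L^\infty(0,T_*;L^2)\cap L^2(0,T_*;H^1)$. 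I expect this coupled bound to be the main obstacle: the time convolution rules out a pointwise-in-time Gronwall argument, and the superlinear reaction is too strong for a naive linear estimate, so the nonlinear Bihari-type inequality and the resulting restriction to a short interval $[0,T_*]$ are genuinely needed.

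Finally, I would pass to the limit. Feeding the uniform bounds back into \eqref{Eq:Fisher3} shows that $\pta u_n$ is bounded in $L^2(0,T_*;H^{-1})$, hence $\gb\ast(u_n-u_0)$ is bounded in $H^1(0,T_*;H^{-1})$; combined with the $L^2(0,T_*;H^1)$ bound, the fractional Aubin--Lions lemma \eqref{Eq:aubinfractional} yields strong convergence $u_n\to u$ in $L^2(0,T_*;L^2)$ along a subsequence. This strong convergence, together with the uniform $L^2(H^1)\hookrightarrow L^2(L^4)$ bound, identifies the limit of the nonlinear term, $u_n^2\to u^2$, and therefore $\gb\ast u_n^2\to\gb\ast u^2$ since $\gb\ast$ is a bounded linear operator. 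Passing to the limit in the projected identity then gives \eqref{Eq:Variational} for all $v\in H^1$, while the regularity $\gb\ast(u-u_0)\in H^1(0,T_*;H^{-1})$ makes the trace meaningful and yields the initial condition $(\gb\ast(u-u_0))(0)=0$.
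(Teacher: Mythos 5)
Your proposal is correct and follows essentially the same route as the paper's proof: Galerkin approximation with Laplacian eigenfunctions, testing with the approximate solution and applying Alikhanov's inequality \cref{Eq:Alikhanov}, convolving with $\ga$ via \cref{Eq:InverseConvolution}, closing the estimate with \cref{Lem:BihariOur}, bounding $\pta u_k$ in $L^2(H^{-1})$, and passing to the limit through the fractional Aubin--Lions embedding \cref{Eq:aubinfractional}, with the initial condition recovered from the $H^1(0,T_*;H^{-1})$ trace. The only differences are cosmetic: you justify solvability of the Galerkin system by a Volterra fixed-point argument where the paper cites the fractional Cauchy--Lipschitz theorem, and you estimate the convolved nonlinearity via $L^4$--$L^{4/3}$ duality and $L^{8/3}$ interpolation where the paper splits by Young's inequality inside the convolution and interpolates $L^3$ norms; both reduce to the same Bihari-type inequality.
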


We employ the Galerkin method to construct approximate solutions and derive uniform energy bounds that permit passage to the limit. Compactness results, in particular \cref{Eq:aubinfractional}, then yield a weak solution.
Similar constructions for time-fractional PDEs can be found in \cite{fritz2024analysis,fritz2021subdiffusive,fritz2021timefractional,fritz2021equivalence,fritz2024well}.
Related analytical approaches have also been applied to fractional reaction–diffusion and diffusion–wave systems in \cite{Rida2010fractional,Baranwal2012analytic,Zheng2022variable}, where compactness, energy inequalities, and fractional Sobolev embeddings play a crucial role in establishing well-posedness and qualitative behaviour.

\begin{proof}
\noindent\textbf{Step 1 (Galerkin approximation).}
Let $H_k = \mathrm{span}\{h_1,\dots,h_k\}$, where $\{h_j\}_{j=1}^\infty$ are eigenfunctions of the Laplacian satisfying
\[
(\nabla h_j,\nabla v)_{L^2} = \lambda_j (h_j,v)_{L^2}
\quad \forall v \in H^1.
\]
The set $\{h_j\}$ forms an orthonormal basis of $L^2$ and is orthogonal in $H^1$.
We seek an approximation of the form
\[
u_k(t) = \sum_{j=1}^k u_k^j(t) h_j,
\]
where $u_k^j:(0,T)\to\R$ are coefficient functions.
Given $u_0$, we define $u_{0,k} = \Pi_k u_0 \in H_k$ as its $L^2$-projection, i.e.,
$u_{0,k} = \sum_{j=1}^k u_{0,k}^j h_j$.

The Galerkin system reads: find $u_k \in H_k$ with $u_k(0)=u_{0,k}$ such that
\begin{equation}\label{Eq:FP_dis}
(\pta u_k,\zeta)_{L^2} + (\nabla u_k,\nabla \zeta)_{L^2}
= (\gb*(u_k-u_k^2),\zeta)_{L^2} + (f,\zeta)_{L^2}
\quad \forall \zeta \in H_k.
\end{equation}
Writing $\Phi(t) = (u_k^1(t),\dots,u_k^k(t))$, this system takes the form
\(\pta \Phi = f(t,\Phi)\),
where $f$ is continuous and locally Lipschitz in $\Phi$.  
By the fractional Cauchy--Lipschitz theorem \cite[Theorem~5.1]{diethelm2010analysis}, there exists a unique continuous solution $u_k \in C^0([0,T_k];H_k)$ on some interval $[0,T_k]$, $0<T_k\le T$.

\medskip
\noindent\textbf{Step 2 (Energy estimates).}
Testing \eqref{Eq:FP_dis} with $u_k$ gives
\begin{equation}\label{Eq:Tested1}
(\pta u_k,u_k)_{L^2} + \|\nabla u_k\|_{L^2}^2
= (\gb*(u_k-u_k^2),u_k)_{L^2} + (f,u_k)_{L^2}.
\end{equation}
For the first term, Alikhanov’s inequality \cref{Eq:Alikhanov} yields
\begin{equation}\label{Eq:Est1Aux1}
(\pta u_k,u_k)_{L^2} \ge \tfrac12 \pta \|u_k\|_{L^2}^2.
\end{equation}
The forcing term is estimated using Hölder, Young, and Sobolev inequalities:
\begin{equation}\label{Eq:Est1Aux1b}
(f,u_k)_{L^2} \le \|f\|_{L^{4/3}}\|u_k\|_{L^4}
\le C\|f\|_{L^{4/3}}^2 + \tfrac14\|\nabla u_k\|_{L^2}^2 + C\|u_k\|_{L^2}^2.
\end{equation}
For the nonlinear term, we apply the Young inequality:
\[
\begin{aligned}
&\int_0^t \gb(t-s) (u_k(s)-u_k^2(s),u_k(t))_{L^2}\ds \\
&\le C \!\int_0^t\! \gb(t-s)\big(\|u_k(s)\|_{L^2}^2+\|u_k(t)\|_{L^2}^2+\|u_k(s)\|_{L^3}^3+\|u_k(t)\|_{L^3}^3\big)\ds\\
&= C\big(\gb*(\|u_k\|_{L^2}^2+\|u_k\|_{L^3}^3)\big)(t)
+ \|\gb\|_{L^1}\big(\|u_k\|_{L^2}^2+\|u_k\|_{L^3}^3\big).
\end{aligned}
\]
Using the Gagliardo–Nirenberg inequality 
$$\|u_k\|_{L^3}\lesssim \|u_k\|_{L^2}^{1/2}\|\nabla u_k\|_{L^2}^{1/2},$$
see \cite[Theorem~1.24]{roubicek}, we obtain
\begin{equation}\label{Eq:Est1Aux2}
\begin{aligned}
&C\big(\gb*(\|u_k\|_{L^2}^2+\|u_k\|_{L^3}^3)\big)(t)
+ \|\gb\|_{L^1}\big(\|u_k\|_{L^2}^2+\|u_k\|_{L^3}^3\big)\\
&\le C\!\Big(\gb*\big(\|u_k\|_{L^2}^2+\|u_k\|_{L^2}^{3/2}\|\nabla u_k\|_{L^2}^{3/2}\big)\Big)(t)
\\ &\quad+ \|\gb\|_{L^1}\big(\|u_k\|_{L^2}^2+\|u_k\|_{L^2}^{3/2}\|\nabla u_k\|_{L^2}^{3/2}\big)\\
&\le C\!\Big(\gb*\big(\|u_k\|_{L^2}^2+\|u_k\|_{L^2}^6\big)\Big)(t)
+ C\big(\|u_k\|_{L^2}^2+\|u_k\|_{L^2}^6\big)
\\ &\quad+ \varepsilon (\gb*\|\nabla u_k\|_{L^2}^2)(t)
+ \tfrac14 \|\nabla u_k\|_{L^2}^2,
\end{aligned}
\end{equation}
where the last inequality follows from the $\varepsilon$–Young inequality.

Substituting \eqref{Eq:Est1Aux1}–\eqref{Eq:Est1Aux2} into \eqref{Eq:Tested1}, we obtain
\[
\begin{aligned}
&\tfrac12 \pta \|u_k\|_{L^2}^2 + \tfrac12 \|\nabla u_k\|_{L^2}^2
\\&\le C\big(\gb*(\|u_k\|_{L^2}^2+\|u_k\|_{L^2}^6)\big)(t)
+ C(\|u_k\|_{L^2}^2+\|u_k\|_{L^2}^6)
\\ &\quad + \varepsilon(\gb*\|\nabla u_k\|_{L^2}^2)(t)
+ C\|f\|_{L^{4/3}}^2.
\end{aligned}
\]
Convolving with $\ga$ and using $\ga*\gb=1$ and $\ga*\pta w=w-w(0)$ (see \cref{Eq:Semigroup,Eq:InverseConvolution}) yields
\begin{equation}\label{Eq:Galerkin2}
\begin{aligned}
&\|u_k(t)\|_{L^2}^2 + (\ga*\|\nabla u_k\|_{L^2}^2)(t)
\\ &\le \|u_{0,k}\|_{L^2}^2 + C(\ga*\|f\|_{L^{4/3}}^2)(T)\\
&\quad + C\!\int_0^t (1+\ga(t-s))(\|u_k(s)\|_{L^2}^2+\|u_k(s)\|_{L^2}^6)\ds\\
&\quad + 2\varepsilon\!\int_0^t \|\nabla u_k(s)\|_{L^2}^2\ds.
\end{aligned}
\end{equation}

Because $\|u_{0,k}\|_{L^2}\le\|u_0\|_{L^2}$ and \cref{Eq:KernelNorm} implies 
$$(\ga*\|\nabla u_k\|_{L^2}^2)(t)\gtrsim \|\nabla u_k\|_{L^2(0,t;L^2)}^2,$$  
we can choose $\varepsilon$ small enough to absorb the last term.
Hence,
\[
\|u_k(t)\|_{L^2}^2
\le \|u_0\|_{L^2}^2 + \|f\|_{L^2_\alpha(L^{4/3})}^2
+ C\!\int_0^t \ga(t-s)\big(\|u_k(s)\|_{L^2}^2+\|u_k(s)\|_{L^2}^6\big)\ds.
\]
Applying the Henry–Bihari–Gronwall lemma (\cref{Lem:BihariOur}) yields
\begin{equation}\label{Eq:FinalEst1}
\|u_k(t)\|_{L^2}^2
\le C(T_*)\big(\|u_0\|_{L^2}^2+\|f\|_{L^2_\alpha(L^{4/3})}^2\big),
\quad t\in[0,T_*],
\end{equation}
for some sufficiently small $T_*=T_*(\|u_0\|_{L^2},\|f\|_{L^2_\alpha(L^{4/3})})$.  
This uniform bound implies that $u_k$ is also bounded in $L^2(0,T_*;H^1)$. \medskip

\noindent\textbf{Step 3 (Weak and strong convergence).} 
From the uniform bound \eqref{Eq:FinalEst1}, we infer that $(u_k)$ is bounded in both 
$L^\infty(0,T_*;L^2)$ and $L^2(0,T_*;H^1)$. 
Hence, there exists a limit function $u$ (up to a subsequence, not relabeled) such that
\begin{equation}\label{Eq:Convergence2a}
\begin{aligned}
u_k &\rightharpoonup u &&\text{ in } L^2(0,T_*;H^1),\\
u_k &\stackrel{*}{\rightharpoonup} u &&\text{ in } L^\infty(0,T_*;L^2),
\end{aligned}
\end{equation}
as $k \to \infty$. 

Since the equation contains the nonlinear term $u_k^2$, we require a strong convergence result for $u_k$. 
To this end, we test the discrete system \eqref{Eq:FP_dis} with $\Pi_k\zeta$, where $\zeta\in L^2(0,T;H^1)$ is arbitrary. 
Using the Young convolution inequality, we obtain
\begin{equation}\label{Eq:DerivBound}
\begin{aligned}
&(\pta u_k,\Pi_k\zeta)_{L^2(L^2)} 
\\ &= (f,\Pi_k\zeta)_{L^2(L^2)} 
  + (\gb*(u_k-u_k^2),\Pi_k\zeta)_{L^2(L^2)} 
  - (\nabla u_k,\nabla \Pi_k\zeta)_{L^2(L^2)}\\
&\le \Big(
\|f\|_{L^2(L^{4/3})}
+ \|\gb*(u_k-u_k^2)\|_{L^2(L^{4/3})}
+ \|\nabla u_k\|_{L^2(L^2)}
\Big)\|\Pi_k\zeta\|_{L^2(H^1)}\\
&\le \Big(
\|f\|_{L^2(L^{4/3})}
+ \|\gb\|_{L^1}\big(
\|u_k\|_{L^2(L^2)} 
+ \|u_k\|_{L^\infty(L^2)}\|u_k\|_{L^2(L^4)}\big)
\\&\quad + \|\nabla u_k\|_{L^2(L^2)}
\Big)\|\zeta\|_{L^2(H^1)}\\
&\le C\,\|\zeta\|_{L^2(H^1)},
\end{aligned}
\end{equation}
which implies that $\pta u_k$ is bounded in $L^2(0,T;H^{-1})$ uniformly in $k$. 

By the compact embedding 
\cite[cf.~Theorem 3.2]{wittbold2021bounded},
\[
L^2(0,T;H^1)\cap H^\alpha(0,T;H^{-1})
\com L^2(0,T;L^2),
\]
we deduce the strong convergence
\begin{equation}\label{Eq:StrongConv1}
u_k \to u \quad\text{strongly in } L^2(0,T;L^2).
\end{equation}

\medskip
\noindent\textbf{Step 4 (Limit passage).}
We now pass to the limit $k\to\infty$ in the time-integrated Galerkin formulation \eqref{Eq:FP_dis}. 
Using the convergences derived above, we show that the weak limit $u$ satisfies the variational identity \eqref{Eq:Variational}. 

For any $\zeta\in H_k$ and $\eta\in C_c^\infty(0,T)$, consider the time-integrated form:
\[
\begin{aligned}
&\int_0^T\!\Big(
\langle \pta u_k,\zeta\rangle_{H^1}
+ (\nabla u_k,\nabla\zeta)_{L^2}
- (f,\zeta)_{L^2}\Big)\eta(t)\,dt\\
&= \int_0^T\!\!\int_0^t \gb(t-s)
\big(u_k(s)(1-u_k(s)),\zeta\big)_{L^2}\ds\,\eta(t)\,dt.
\end{aligned}
\]
All linear terms converge directly by weak convergence.
For the nonlinear term, note that $u_k\zeta\rightharpoonup u\zeta$ weakly in $L^2(0,T;L^2)$ and $1-u_k\to1-u$ strongly in $L^2(0,T;L^2)$, hence $$u_k(1-u_k)\zeta\rightharpoonup u(1-u)\zeta \text{ weakly in } L^1(0,T;L^1).$$ 
Since convolution with $\gb$ is linear and continuous from $L^1$ to $L^1$, it is weak-to-weak continuous. 
Therefore, $$\gb*(u_k(1-u_k)\zeta)\rightharpoonup \gb*(u(1-u)\zeta) \text{ weakly in } L^1(0,T;L^1).$$ 

Finally, because $\bigcup_k H_k$ is dense in $H^1$, the limit $u$ satisfies the variational formulation \eqref{Eq:Variational} for all $v\in H^1$, and hence is a weak solution to \cref{Eq:Fisher3}.

\medskip
\noindent\textbf{Step 5 (Initial condition).}
Since $\pta u_k = \pt(\gb*(u_k-u_0))$ is bounded in $L^2(0,T;H^{-1})$ (by \eqref{Eq:DerivBound}), the Aubin–Lions lemma \cite[Theorem II.5.16]{boyer2012mathematical} implies that
\[
\gb*(u_k-u_k(0)) \to \gb*(u-u_0)
\quad\text{in } L^2(0,T;L^2)\cap C([0,T];H^{-1}).
\]
Using the Young convolution inequality to justify boundedness, we evaluate at $t=0$ and obtain
\[
0 = (\gb*(u_k-u_k(0)))(0)
\longrightarrow (\gb*(u-u_0))(0)
\quad\text{in } H^{-1}(\Omega),
\]
hence $(\gb*(u-u_0))(0)=0$. 
This verifies the initial condition and concludes the proof.
\end{proof}

\subsection{Global well-posedness for small data}

We now establish global existence and decay for small initial data. 
The key idea is that, for sufficiently small $\|u_0\|_{L^2}$, the dissipative effects dominate the nonlinear growth, which allows the local solution to be extended indefinitely in time. 
Before presenting the main theorem, we first prove an auxiliary lemma that plays a crucial role in the continuation argument.

\begin{lemma}[Regularity of the history force] \label{Lem:FRegularityL43}
Let \(u \in L^\infty(0,T_*;L^2) \cap L^2(0,T_*;H^1)\) be the solution obtained in \cref{Thm:LocalExistence}, satisfying 
\(\sup_{t \in (0,T_*)}\|u(t)\|_{L^2}^2 \le 2\epsilon\) 
for some $\epsilon>0$. 
Define the history force
\[
F(t) = \int_0^{T_*} g_\alpha(t + T_* - s)\,(u(s) - u(s)^2)\,\dd s.
\]
Then, for any \(T > 0\), we have \(F \in L^2_\alpha(0,T;L^{4/3})\).
\end{lemma}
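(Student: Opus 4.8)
The plan is to prove the slightly stronger statement that $F$ is bounded in $L^{4/3}$ \emph{uniformly in time}, since a pointwise bound $\|F(t)\|_{L^{4/3}}^2\le K$ immediately gives $(\ga*\|F\|_{L^{4/3}}^2)(t)\le K(\ga*1)(t)=Kt^\alpha/\Gamma(\alpha+1)\le KT^\alpha/\Gamma(\alpha+1)$, so that $\|F\|_{L^2_\alpha(0,T;L^{4/3})}<\infty$ for every $T>0$. Writing $G(s):=\|u(s)-u(s)^2\|_{L^{4/3}}$ and applying Minkowski's integral inequality to the definition of $F$, the first step is the pointwise bound
\[
\|F(t)\|_{L^{4/3}}\le \int_0^{T_*}\ga(t+T_*-s)\,G(s)\,\dd s .
\]

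Next I would estimate this uniformly in $t\ge 0$. Splitting $\ga=\ga^{1/2}\ga^{1/2}$, applying the Cauchy--Schwarz inequality in $s$, and using the monotonicity of the decreasing kernel (so that $\ga(t+T_*-s)\le \ga(T_*-s)$ in the second factor) yields
\[
\|F(t)\|_{L^{4/3}}^2\le \Big(\int_0^{T_*}\ga(t+T_*-s)\,\dd s\Big)\,(\ga*G^2)(T_*)\le \frac{T_*^\alpha}{\Gamma(\alpha+1)}\,(\ga*G^2)(T_*),
\]
where the first factor is bounded uniformly in $t$ because shifting the integration window to the right only decreases the integral of $\ga$ (after the substitution $s\mapsto T_*-s$, $\ga(t+s)\le\ga(s)$). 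It then remains to control $G$ and to show $(\ga*G^2)(T_*)<\infty$. For $G$, I would use $G(s)\le\|u(s)\|_{L^{4/3}}+\|u(s)\|_{L^{8/3}}^2$, the embedding $L^2\hookrightarrow L^{4/3}$ on the bounded domain, and the Gagliardo--Nirenberg/Sobolev inequality $\|u\|_{L^{8/3}}^2\lesssim\|u\|_{L^2}\|u\|_{H^1}$ (valid for $d\le4$), followed by Young's inequality and the smallness hypothesis $\|u\|_{L^2}^2\le 2\epsilon$, to obtain the key bound
\[
G(s)^2\lesssim \epsilon\big(1+\|\nabla u(s)\|_{L^2}^2\big).
\]

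The crux — and the step I expect to be the main obstacle — is the finiteness of $(\ga*G^2)(T_*)$. Since $\ga(T_*-\cdot)$ is singular at $s=T_*$, the mere fact that $G\in L^2(0,T_*)$ (which follows from $u\in L^2(0,T_*;H^1)$) does not suffice to pair against this weight once $\alpha\le\tfrac12$, because then $\ga\notin L^2(0,T_*)$. The way around this is to exploit the \emph{weighted} gradient bound actually produced by the energy estimate \eqref{Eq:Galerkin2} in the proof of \cref{Thm:LocalExistence}: that estimate controls $(\ga*\|\nabla u_k\|_{L^2}^2)(t)$ uniformly in $k$ and $t$, and by weak lower semicontinuity this passes to the limit, giving $M:=\sup_{t\in(0,T_*)}(\ga*\|\nabla u\|_{L^2}^2)(t)<\infty$, i.e.\ $\nabla u\in L^2_\alpha(0,T_*;L^2)$. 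Combined with the bound on $G^2$ this yields
\[
(\ga*G^2)(T_*)\lesssim \epsilon\Big(\tfrac{T_*^\alpha}{\Gamma(\alpha+1)}+M\Big)<\infty,
\]
so $\|F(t)\|_{L^{4/3}}^2\le K$ uniformly in $t$, and $F\in L^2_\alpha(0,T;L^{4/3})$ follows for every $T>0$. The only point requiring care beyond routine inequalities is thus invoking the weighted $H^1$-bound $\nabla u\in L^2_\alpha(0,T_*;L^2)$ rather than the plain $L^2(0,T_*;H^1)$-bound recorded in the statement of \cref{Thm:LocalExistence}; everything else is Minkowski, Cauchy--Schwarz, monotonicity of $\ga$, and Gagliardo--Nirenberg.
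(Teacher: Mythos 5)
Your proof is correct, and its second half takes a genuinely different route from the paper's. The first half coincides: the paper also starts from Minkowski's inequality and the same interpolation/Sobolev bounds to reach $\|u(s)-u(s)^2\|_{L^{4/3}}\lesssim\sqrt{\epsilon}\,(1+\|u(s)\|_{L^4})$. From there, however, the paper zero-extends the kernel to a function $G$ and the factor $1+\|u(\cdot)\|_{L^4}$ to a function $H$, applies Young's convolution inequality to get $F\in L^2(0,T;L^{4/3})$ using only the plain bound $u\in L^2(0,T_*;H^1)$, and then handles the weighted norm by the change of variables $\sigma=\tau+T_*$, asserting that $\int_{T_*}^{t+T_*}\ga(t+T_*-\sigma)\,[(G*H)(\sigma)]^2\,\dd\sigma$ is finite because $G*H\in L^2$. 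You instead split $\ga=\ga^{1/2}\ga^{1/2}$, use Cauchy--Schwarz and monotonicity of the kernel to prove the stronger, uniform-in-time bound $\|F(t)\|_{L^{4/3}}^2\le \bigl(T_*^{\alpha}/\Gamma(\alpha+1)\bigr)(\ga*G^2)(T_*)$ (your $G$ being $\|u-u^2\|_{L^{4/3}}$), and pay for it by invoking the weighted gradient bound $\sup_{t\in(0,T_*)}(\ga*\|\nabla u\|_{L^2}^2)(t)<\infty$, which is not recorded in the statement of \cref{Thm:LocalExistence} but is legitimately available from \eqref{Eq:Galerkin2} after passing to the limit, via weak lower semicontinuity of convex integral functionals along the weak convergence \eqref{Eq:Convergence2a}. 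The trade-off is worth noting: the paper's argument stays entirely within the spaces stated in \cref{Thm:LocalExistence}, but its final step is its weakest link --- $(G*H)^2$ is a priori only in $L^1$, and pairing an $L^1$ function against the singular kernel $\ga$ does not by itself give a finite value for \emph{every} $t$, let alone a finite supremum over $t$ as the definition of $L^2_\alpha$ requires; making that step rigorous through Young/H\"older needs higher integrability of $G*H$, which comes for free only when $\alpha$ is not too small. Your uniform-in-time bound sidesteps this issue for all $\alpha\in(0,1)$, so besides being a different decomposition it arguably closes a gap in the paper's own proof, at the modest cost of extracting one extra a priori bound from the existence proof.
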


\begin{proof}
We first estimate pointwise in time:
\begin{align*}
\|F(t)\|_{L^{4/3}}
&\le \int_0^{T_*} g_\alpha(t + T_* - s)\,\|u(s) - u(s)^2\|_{L^{4/3}}\,\dd s\\
&\le \int_0^{T_*} g_\alpha(t + T_* - s)
\big(\|u(s)\|_{L^{4/3}} + \|u(s)^2\|_{L^{4/3}}\big)\,\dd s.
\end{align*}
Using the uniform bound $\|u(s)\|_{L^2}^2 \le 2\epsilon$, we have
\[
\|u(s)\|_{L^{4/3}} \le C \|u(s)\|_{L^2} \le C\sqrt{2\epsilon}.
\]
For the quadratic term, Hölder and the Gagliardo--Nirenberg inequality yield
\[
\|u(s)^2\|_{L^{4/3}} = \|u(s)\|_{L^{8/3}}^2
   \le C\,\|u(s)\|_{L^2}\|u(s)\|_{L^4}
   \le C\sqrt{2\epsilon}\,\|u(s)\|_{L^4}.
\]
Thus,
\[
\|u(s)-u(s)^2\|_{L^{4/3}}
   \le C\sqrt{\epsilon}\,\big(1+\|u(s)\|_{L^4}\big),
\]
and consequently,
\[
\|F(t)\|_{L^{4/3}}
   \le C\sqrt{\epsilon}\int_0^{T_*} g_\alpha(t + T_* - s)
   \big(1+\|u(s)\|_{L^4}\big)\,\dd s.
\]
Introduce the auxiliary functions
\[
G(t) = 
\begin{cases}
g_\alpha(t), & t \ge 0,\\
0, & t < 0,
\end{cases}
\qquad
H(s) =
\begin{cases}
1+\|u(s)\|_{L^4}, & s \in [0,T_*],\\
0, & \text{otherwise}.
\end{cases}
\]
Then $\|F(t)\|_{L^{4/3}} \le C\sqrt{\epsilon}\,(G*H)(t+T_*)$, and hence
\[
\|F\|_{L^2(0,T;L^{4/3})}
   \le C\sqrt{\epsilon}\,\|g_\alpha\|_{L^1(0,\infty)}\,\|H\|_{L^2(0,T_*)}.
\]
Since $H(s)=1+\|u(s)\|_{L^4}$ and $u\in L^2(0,T_*;H^1)\hookrightarrow L^2(0,T_*;L^4)$, it follows that $\|H\|_{L^2(0,T_*)}<\infty$, and thus $F\in L^2(0,T;L^{4/3})$.

Next, we show that $F\in L^2_\alpha(0,T;L^{4/3})$.  
Indeed,
\begin{align*}
(\ga * \|F\|_{L^{4/3}}^2)(t)
&= \int_0^t \ga(t-\tau)\|F(\tau)\|_{L^{4/3}}^2\,\dd\tau\\
&\le C\epsilon\int_0^t \ga(t-\tau)\,[(G*H)(\tau+T_*)]^2\,\dd\tau.
\end{align*}
With the change of variables $\sigma=\tau+T_*$, we obtain for $t\in(0,T)$
\[
(\ga * \|F\|_{L^{4/3}}^2)(t)
= C\epsilon\int_{T_*}^{t+T_*}\ga(t+T_*-\sigma)\,[(G*H)(\sigma)]^2\,\dd\sigma.
\]
Since $G*H\in L^2(T_*,T+T_*)$, the integral is finite for all $t\in(0,T)$, proving the claim.
\end{proof}

\begin{theorem}[Global well-posedness for small data] \label{Thm:GlobalSmallData}
Let $\Omega\subset\R^d$, $d\le4$, be a bounded Lipschitz domain. 
There exists $\epsilon>0$ such that if $u_0\in L^2$ with $\|u_0\|_{L^2}\le\epsilon$, then the time-fractional Fisher--KPP equation \cref{Eq:Fisher} admits a unique global weak solution
\[
u \in L^\infty(0,\infty;L^2)\cap L^2(0,\infty;H^1),
\qquad g_\alpha*u\in H^1(0,\infty;H^{-1}),
\]
satisfying $u(0)=u_0$ weakly in $L^2$, and for almost every $t>0$,
\[
\langle \partial_t^\alpha u,v\rangle_{H^1}
+ (\nabla u,\nabla v)_{L^2}
= (g_\alpha*(u-u^2),v)_{L^2}
\quad \forall v\in H^1.
\]
\end{theorem}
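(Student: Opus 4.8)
The plan is to promote the local solution of \cref{Thm:LocalExistence} to a global one by a continuation argument in which the memory accumulated on each time slab re-enters the equation on the next slab as an admissible forcing, the regularity of this forcing being exactly what \cref{Lem:FRegularityL43} supplies. Smallness of $\|u_0\|_{L^2}$ enters in two places: it keeps the local existence time bounded below by a fixed $\tau>0$ across all slabs, and it closes a bootstrap that preserves the running bound $\sup_t\|u(t)\|_{L^2}^2\le 2\epsilon$ for all times. I would first fix $\epsilon$ small and apply \cref{Thm:LocalExistence} with $f=0$ to obtain $u$ on $[0,T_*]$; inspecting \cref{Eq:FinalEst1} with $f=0$ and using that the Bihari time of \cref{Lem:BihariOur} can be made to fill a prescribed window $\tau$ once the data are small, I would choose $\epsilon$ so small that the first slab has length at least $\tau$ and that the bound $\sup_{[0,\tau]}\|u\|_{L^2}^2\le 2\epsilon$ is in fact improved, not merely attained.

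To continue past $\tau$, set $\tilde u(t)=u(T_*+t)$ and split every memory convolution at $T_*$. The contributions inherited from $[0,T_*]$ collect into a history force of the type treated in \cref{Lem:FRegularityL43}, while the contributions from $[T_*,T_*+t]$ reproduce \cref{Eq:Fisher3} for $\tilde u$ with datum $\tilde u(0)=u(T_*)$ and forcing given by that history. Under the running smallness, \cref{Lem:FRegularityL43} gives this forcing in $L^2_\alpha(0,T;L^{4/3})$ with norm $\lesssim\sqrt\epsilon$, so \cref{Thm:LocalExistence} applies verbatim to the restarted problem; since both $\|\tilde u(0)\|_{L^2}\le\sqrt{2\epsilon}$ and the forcing norm are small, the Bihari constant $c$ in \cref{Lem:BihariOur} is small and the new slab again has length at least $\tau$. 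Iterating over slabs $[k\tau,(k+1)\tau]$ and concatenating the weak solutions yields a solution on $[0,\infty)$; the uniform bound gives $u\in L^\infty(0,\infty;L^2)$, summing the dissipation $\|\nabla u\|_{L^2}^2$ controlled on each slab gives $u\in L^2(0,\infty;H^1)$, the equation then returns $\ga*u\in H^1(0,\infty;H^{-1})$, and the initial trace $(\gb*(u-u_0))(0)=0$ passes to the limit as in Step~5 of the proof of \cref{Thm:LocalExistence}.

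For uniqueness I would take two solutions $u_1,u_2$ with the same datum, test the difference equation for $w=u_1-u_2$ with $w$, and use Alikhanov's inequality \cref{Eq:Alikhanov} on the left-hand side. Writing $u_1^2-u_2^2=(u_1+u_2)w$ and using the uniform smallness $\|u_i\|_{L^2}\le\sqrt{2\epsilon}$ together with $H^1\hookrightarrow L^4$ (valid for $d\le4$), the convolved nonlinear difference is controlled by $\sqrt\epsilon\,\|w\|_{H^1}$, so that after convolving with $\ga$ a Henry--Gronwall argument forces $w\equiv0$; the smallness is precisely what keeps the effective Lipschitz constant of the nonlinearity subcritical.

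The hard part will be maintaining the running bound $\sup_t\|u\|_{L^2}^2\le 2\epsilon$ uniformly over the infinite family of slabs. Because $\ga$ and $\gb$ are not integrable on $(0,\infty)$, the memory does not decay quickly enough to be summed slab-by-slab in a naive way, and the linear part of the reaction cannot simply be treated as a small forcing; instead one must let the dissipative term, coercive on $H^1$ under the Dirichlet condition via Poincaré, dominate the memory-laden reaction, while exploiting the genuinely quadratic $O(\epsilon)$ size of the nonlinear correction isolated by \cref{Lem:FRegularityL43}. Balancing these two effects so that the bootstrap closes with an $\epsilon$ independent of the horizon is the crux of the argument, and is exactly where the restriction to small data is indispensable.
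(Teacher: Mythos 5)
Your overall strategy is the one the paper uses: obtain the local solution from \cref{Thm:LocalExistence} with $f=0$, restart the equation after splitting the memory at the restart time, recognize the inherited part of the convolution as an admissible forcing via \cref{Lem:FRegularityL43}, and close a smallness bootstrap with \cref{Lem:BihariOur}. Your uniqueness paragraph is in fact \emph{more} than the paper offers: the paper disposes of uniqueness by citing ``local uniqueness'' of \cref{Thm:LocalExistence}, but that theorem asserts only existence, so your argument---testing the difference equation for $w=u_1-u_2$ with $w$, using \cref{Eq:Alikhanov}, writing $u_1^2-u_2^2=(u_1+u_2)w$, estimating the convolved term through $\|u_1+u_2\|_{L^2}\|w\|_{L^4}\lesssim\sqrt{\epsilon}\,\|w\|_{H^1}$, and closing with a Henry--Gronwall step---is the substance that the paper's one-line claim would itself require.

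The genuine divergence, and the place where your version is strictly harder, is the continuation mechanism. The paper does not iterate over slabs: it defines $T_{\max}=\sup\{T>0:\|u(t)\|_{L^2}^2\le2\epsilon\ \forall t\in[0,T]\}$, assumes $T_{\max}<\infty$, restarts \emph{once} at $T_{\max}$, and derives a contradiction. A single restart at a fixed finite time only requires the history force to lie in $L^2_\alpha(0,\delta;L^{4/3})$ for that one restart, which is exactly what \cref{Lem:FRegularityL43} delivers. Your scheme of concatenating infinitely many slabs $[k\tau,(k+1)\tau]$ of uniform length $\tau$ needs the history-force norm to be small \emph{uniformly in $k$}, and this fails as stated: after a restart at time $T_R$, even the linear part of the history contributes
\[
\int_0^{T_R} g_\alpha(t+T_R-s)\,\|u(s)\|_{L^{4/3}}\ds
\;\lesssim\; \sqrt{\epsilon}\;\frac{(t+T_R)^\alpha-t^\alpha}{\Gamma(1+\alpha)}
\;\sim\; \sqrt{\epsilon}\,T_R^{\alpha},
\]
which grows without bound because $g_\alpha\notin L^1(0,\infty)$. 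Hence the claim that ``the forcing norm is $\lesssim\sqrt{\epsilon}$, so the Bihari constant is small and every slab again has length at least $\tau$'' does not hold with a horizon-independent constant; this is precisely the obstruction you concede in your final paragraph but never resolve, and without it your concatenation does not reach $t=\infty$. (The same issue undercuts the assertion that summing per-slab dissipation bounds yields $u\in L^2(0,\infty;H^1)$: uniform per-slab bounds sum to infinity unless some genuine decay, such as the Dirichlet--Poincar\'e Mittag--Leffler decay discussed in the paper's remark, is invoked.) If you recast your iteration as the paper's one-shot maximal-time contradiction, the uniformity problem collapses to a single finite restart and the rest of your argument goes through essentially as written.
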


\begin{proof}
We combine the local well-posedness result \cref{Thm:LocalExistence} with a continuation (bootstrap) argument exploiting the smallness of the initial data.

\medskip
\noindent\textbf{Step 1 (Local solution and bootstrap).}
From \cref{Thm:LocalExistence} with $f\equiv0$, there exists $T_*=T_*(\|u_0\|_{L^2})>0$ and a solution
\[
u \in L^\infty(0,T_*;L^2)\cap L^2(0,T_*;H^1),
\qquad g_\alpha*u\in H^1(0,T_*;H^{-1}).
\]
Testing the variational formulation with $v=u$ and applying Alikhanov’s inequality \cref{Eq:Alikhanov}, we derive
\begin{equation}\label{Eq:EnergyLocal}
\frac12\,\partial_t^\alpha\|u(t)\|_{L^2}^2+\|\nabla u(t)\|_{L^2}^2
\le C\!\int_0^t g_\alpha(t-s)\big(\|u(s)\|_{L^2}^2+\|u(s)\|_{L^2}^6\big)\,\dd s.
\end{equation}
Setting $y(t):=\|u(t)\|_{L^2}^2$, this gives
\[
y(t)\le y(0)+C\!\int_0^t\ga(t-s)\big(y(s)+y(s)^3\big)\,\dd s.
\]
By the Henry--Bihari--Gronwall inequality (\cref{Lem:BihariOur}), there exists $T_*>0$ such that, if $\|u_0\|_{L^2}^2=y(0)\le\epsilon$ is sufficiently small,
\[
y(t)\le 2\epsilon\quad\forall\,t\in[0,T_*].
\]
Hence $\|u(t)\|_{L^2}$ remains small on the interval $[0,T_*]$.

\medskip
\noindent\textbf{Step 2 (Maximal time of existence).}
Define
\[
T_{\max}:=\sup\big\{T>0:\|u(t)\|_{L^2}^2\le2\epsilon~\forall\,t\in[0,T]\big\}.
\]
We will show that $T_{\max}=\infty$. 
Suppose, to the contrary, that $T_{\max}<\infty$. 
Then $\|u(T_{\max})\|_{L^2}^2\le2\epsilon$. 
We now restart the system at time $t=T_{\max}$.

\medskip
\noindent\textbf{Step 3 (Shifted problem and history term).}
Define $v(t):=u(t+T_{\max})$. 
Then $v$ satisfies
\[
\partial_t^\alpha v-\Delta v=g_\alpha*(v-v^2)+F(t),
\qquad v(0)=u(T_{\max}),
\]
where
\[
F(t)=\int_0^{T_{\max}} g_\alpha(t+T_{\max}-s)\,(u(s)-u(s)^2)\,\dd s.
\]
By \cref{Lem:FRegularityL43}, $F\in L^2_\alpha(0,\delta;L^{4/3})$ for any $\delta>0$, 
so the shifted problem fits the setting of \cref{Thm:LocalExistence}.

\medskip
\noindent\textbf{Step 4 (Local solution for the shifted problem).}
Applying \cref{Thm:LocalExistence} to $v$ with initial condition $v(0)=u(T_{\max})$ and forcing $f=F$, we obtain $\delta>0$ such that
\[
v\in L^\infty(0,\delta;L^2)\cap L^2(0,\delta;H^1),
\quad g_\alpha*v\in H^1(0,\delta;H^{-1}),
\]
and
\[
\|v(t)\|_{L^2}^2
\le \|v(0)\|_{L^2}^2
+ C\!\int_0^t g_\alpha(t-s)\big(\|F(s)\|_{L^{4/3}}^2+\|v(s)\|_{L^2}^2+\|v(s)\|_{L^2}^6\big)\,\dd s.
\]
Since $\|v(0)\|_{L^2}^2\le2\epsilon$ and $F\in L^2_\alpha(0,\delta;L^{4/3})$, 
choosing $\epsilon$ sufficiently small ensures $\|v(t)\|_{L^2}^2\le2\epsilon$ on $[0,\delta]$. 
Hence $u$ extends beyond $T_{\max}$, a contradiction. 
Therefore $T_{\max}=\infty$ and $\|u(t)\|_{L^2}^2\le2\epsilon$ for all $t\ge0$.

\medskip
\noindent\textbf{Step 5 (Global existence and uniqueness).}
Repeating the extension argument iteratively yields a global solution
\[
u\in L^\infty(0,\infty;L^2)\cap L^2(0,\infty;H^1),
\qquad g_\alpha*u\in H^1_{\mathrm{loc}}(0,\infty;H^{-1}),
\]
satisfying $\|u(t)\|_{L^2}^2\le2\epsilon$ for all $t\ge0$. 
Uniqueness follows from the local uniqueness of \cref{Thm:LocalExistence} and the continuation process.
\end{proof}

\begin{remark}
With homogeneous Dirichlet boundary conditions, the solution satisfies a fractional energy decay. 
By the Poincaré inequality $\|\nabla u\|_{L^2}^2\ge\lambda_1\|u\|_{L^2}^2$, \eqref{Eq:EnergyLocal} implies
\[
y(t)\le y(0)+\Big(C-\frac{\lambda_1}{2}\Big)\!\int_0^t\ga(t-s)y(s)\,\dd s.
\]
By the fractional Grönwall inequality \cite[Lemma~6.19]{diethelm2010analysis}, if $\lambda_1/2>C$, then
\[
\|u(t)\|_{L^2}^2 \le 2\|u_0\|_{L^2}^2E_\alpha(-\mu t^\alpha), \qquad 
\mu=\frac{\lambda_1}{2}-C>0,
\]
where $E_\alpha$ denotes the Mittag–Leffler function. 
Thus, $\|u(t)\|_{L^2}\to0$ as $t\to\infty$. 
Without Dirichlet data, uniform bounds and dissipation of 
$\int_0^t\ga(t-s)\|\nabla u(s)\|_{L^2}^2\,\dd s$ remain valid, 
but decay may fail due to the non-decaying mean mode.
\end{remark}

\begin{remark}\label{rem2}
The smallness assumption on $\|u_0\|_{L^2}$ is crucial for the bootstrap argument. 
For larger data, finite-time blow-up cannot be excluded. 
However, for the alternative model \cref{Eq:WrongModel}, global well-posedness holds for initial data $u_0(x)\in[0,1]$. 
Here, the logistic term is non-convolved, and a weak comparison principle applies as in \cite{gunzburger2005modeling}. 
Following \cite{fritz2025wellposedness}, we test the variational form with $-[-u]_+$, where $[u]_+=\max\{0,u\}$, obtaining via Alikhanov’s inequality
\[
\tfrac12\,\pta\|[-u]_+\|_{L^2}^2+\|\nabla[-u]_+\|_{L^2}^2
\le(1+[-u]_+,[-u]_+^2)_{L^2}.
\]
Using Hölder and the Gagliardo--Nirenberg inequality, and then convolving with $\ga$, one finds that $\|[-u]_+\|_{L^2}=0$ locally (by a Bihari--Gronwall argument). 
This ensures global nonnegativity and global existence whenever $u_0\in[0,1]$.
\end{remark}

\section{Numerical experiments} \label{Sec:Numerics}

Numerical treatment of time-fractional partial differential equations poses several challenges due to the presence of nonlocal operators and long-range memory effects.
We adapt techniques from the fractional-diffusion literature \cite{diethelm2020good,jin2019numerical} and build on our previous work on time-fractional Fokker–Planck equations \cite{fritz2024analysis}.
Key numerical references include the convolution-quadrature method \cite{Lubich86,Lubich88,dolz2021fast}, graded temporal meshes \cite{pinto2017numerical,mustapha2022second}, and kernel-compression approaches \cite{baffet2017kernel,khristenko2023solving}.
Further developments of numerical schemes for fractional PDEs—such as Runge–Kutta convolution quadrature and fast compact finite-difference or finite-element solvers—can be found in \cite{Zhang2020RungeKutta,Yin2024Hadamard,BetancurHerrera2020Caputo,Feng2021anomalous}, which provide additional validation of the stability and convergence properties of convolution-based discretizations used here.

\subsection{Temporal and spatial discretization}

We employ a graded temporal grid
\[
0=t_0<t_1<\dots<t_N=T, 
\qquad t_n=\Big(\frac{n}{N}\Big)^\gamma T,
\]
with grading parameter $\gamma\ge1$ and local step size $\Delta t_n=t_n-t_{n-1}$. 
The spatial domain $\Omega\subset\R^2$ is discretized using conforming $P_1$ finite elements with mesh size $h$, and we denote by $V_h\subset H^1$ the corresponding finite element space.

For $n\ge1$ we introduce the quadrature weights
\[
b_j^{(n)}:=\frac{(t_n-t_j)^{1-\alpha}-(t_n-t_{j+1})^{1-\alpha}}{\Gamma(2-\alpha)}, 
\qquad j=0,\dots,n-1,
\]
which represent piecewise-constant quadrature of the kernel $g_{1-\alpha}(t)=t^{-\alpha}/\Gamma(1-\alpha)$ over the interval $[t_j,t_{j+1}]$. 

These coefficients appear in two contexts:
\begin{itemize}
    \item[(i)] The graded $L^1$ scheme \cite{lin2007finite,jin2019numerical,fritz2024well} for the Caputo derivative is expressed as
    \[
    \partial_t^\alpha u(t_n)\approx 
    \sum_{k=1}^n a_{n-k}^{(n)}(u^k-u^{k-1})
    :=\sum_{k=1}^n\frac{b_{k-1}^{(n)}}{\Delta t_k}(u^k-u^{k-1}),
    \]
    so that the coefficients $a_{n-k}^{(n)}$ are rescaled quadrature weights $b_{k-1}^{(n)}$.
    \item[(ii)] For a generic function $f$, the convolution $(\gb*f)(t_n)$ is approximated by
    \[
    (\gb*f)(t_n)\approx \sum_{j=0}^{n-1}b_j^{(n)}f^j,
    \]
    where $f^j$ denotes the piecewise-constant value of $f$ on $[t_j,t_{j+1})$.
\end{itemize}

\subsection{Fully discrete formulation}

The fully discrete scheme for the time-fractional Fisher–KPP equation \cref{Eq:Fisher}, as analyzed in \cref{Sec:Analysis}, reads as follows:  
find $u_h^n\in V_h$ such that, for all $\zeta_h\in V_h$,
\begin{equation}\label{eq:discrete_A}
\begin{aligned}
&a_0^{(n)}(u_h^n,\zeta_h)_{L^2}
+ D(\nabla u_h^n,\nabla\zeta_h)_{L^2}
- r b_n^{(n)}(u_h^n(1-u_h^n),\zeta_h)_{L^2}\\
&= a_0^{(n)}(u_h^{n-1},\zeta_h)_{L^2}
- \sum_{k=1}^{n-1}a_{n-k}^{(n)}(u_h^k-u_h^{k-1},\zeta_h)_{L^2}
+ r\sum_{j=0}^{n-1}b_j^{(n)}(u_h^j(1-u_h^j),\zeta_h)_{L^2}.
\end{aligned}
\end{equation}

For the alternative (Caputo-in-time) formulation \cref{Eq:WrongModel}, the discrete system becomes:
find $u_h^n\in V_h$ such that, for all $\zeta_h\in V_h$,
\[
\begin{aligned}
&a_0^{(n)}(u_h^n,\zeta_h)_{L^2}
+ D(\nabla u_h^n,\nabla\zeta_h)_{L^2}
- r(u_h^n(1-u_h^n),\zeta_h)_{L^2}\\
&= a_0^{(n)}(u_h^{n-1},\zeta_h)_{L^2}
- \sum_{k=1}^{n-1}a_{n-k}^{(n)}(u_h^k-u_h^{k-1},\zeta_h)_{L^2}.
\end{aligned}
\]

\subsection{Implementation and setup}

All computations were carried out using the Firedrake framework \cite{rathgeber2016firedrake}.  
We set $\Omega=(-1,1)^2$ with mesh size $h=2^{-7}$ and final time $T=5$, using a temporal grading $\gamma=2$. 
Model parameters were fixed as $D=10^{-3}$ and $r=5$.

Three types of initial data were considered:
\begin{enumerate} \itemsep0em
    \item a single circular patch,
    \item four separated circles,
    \item a complex “blob-shaped’’ region defined by a smooth level-set function
    \[
    \begin{aligned}
    s(x,y)&=(\sin(6(x-0.6)+2(y-0.5))+1)\,(7(x-0.6)-0.2)^2\\
    &\quad+(\sin(-8(x-0.6)+10(y-0.5))+1.1)\,(9(y-0.5)+0.1)^2-1,
    \end{aligned}
    \]
    restricted to $0.05<x<0.9$ and $0.1<y<0.85$.
\end{enumerate}
The initial condition is given by the smoothed indicator
\[
u_0(x,y)=\frac{1}{2}\bigl(1-\tanh(s(x,y)/\varepsilon)\bigr),
\]
with $\varepsilon=10h$, producing a smooth transition layer of width $\approx10h$ between values $0$ and $1$. 
The three initial geometries are depicted in \cref{fig:initial}.

\begin{figure}[htb!]
    \centering
    \includegraphics[width=.9\textwidth,page=1]{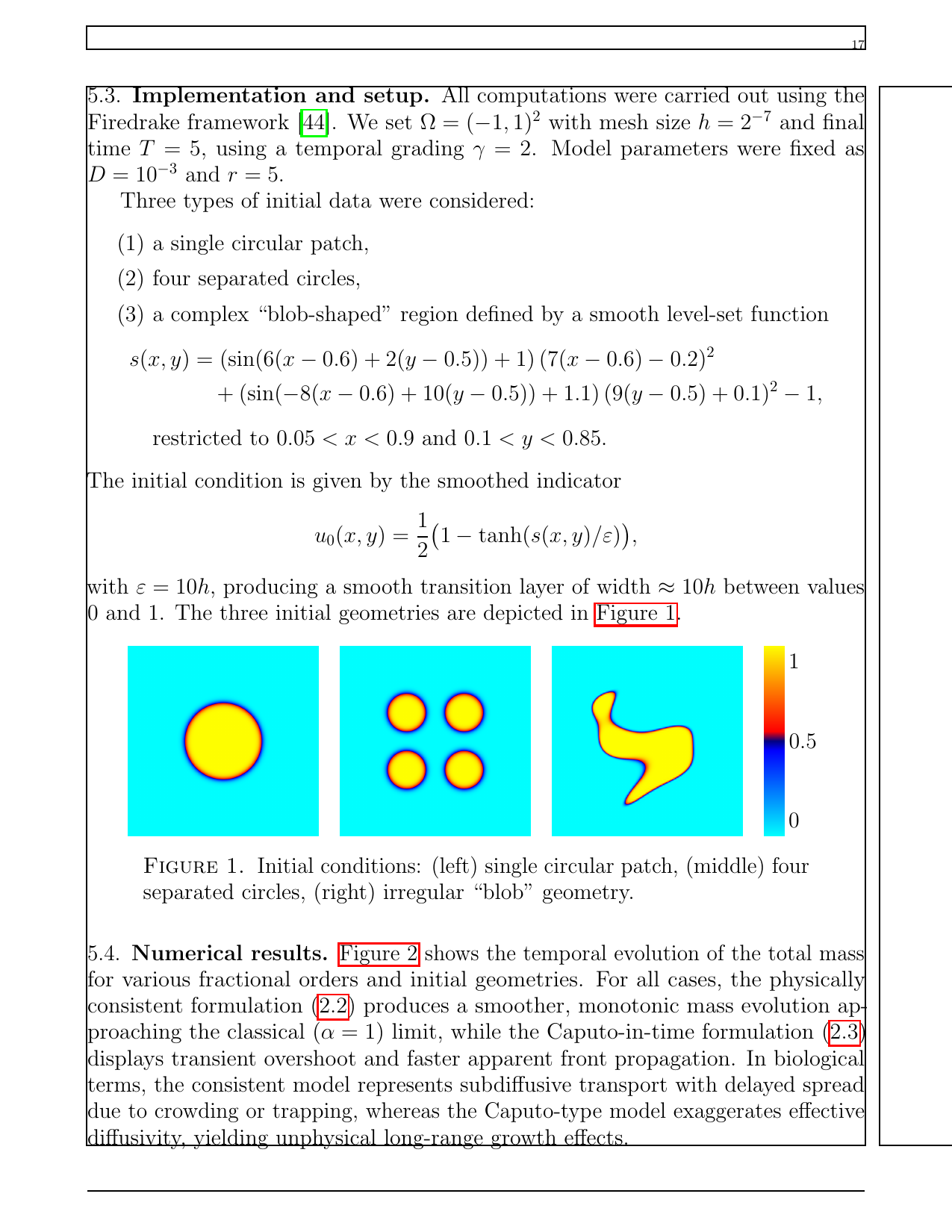}
    \caption{Initial conditions: (left) single circular patch, (middle) four separated circles, (right) irregular ``blob'' geometry.}
    \label{fig:initial}
\end{figure}

\subsection{Numerical results}

\Cref{fig:mass} shows the temporal evolution of the total mass for various fractional orders and initial geometries.  
For all cases, the physically consistent formulation \cref{Eq:Fisher} produces a smoother, monotonic mass evolution approaching the classical ($\alpha=1$) limit, 
while the Caputo-in-time formulation \cref{Eq:WrongModel} displays transient overshoot and faster apparent front propagation.  
In biological terms, the consistent model represents subdiffusive transport with delayed spread due to crowding or trapping, whereas the Caputo-type model exaggerates effective diffusivity, yielding unphysical long-range growth effects.

\begin{figure}[htb!]
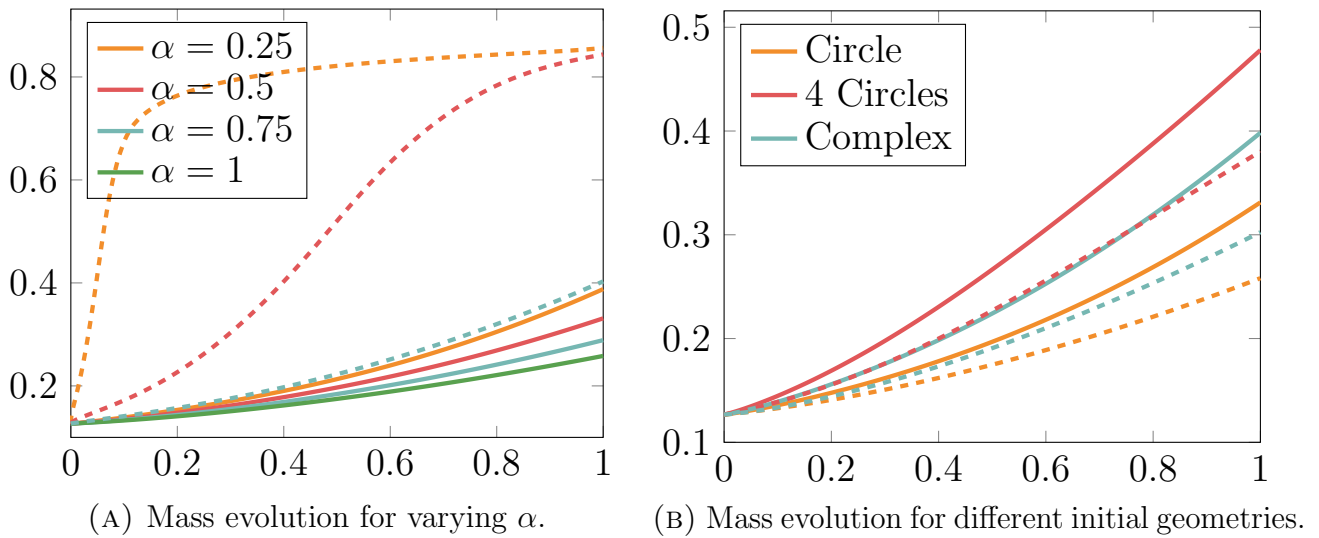

    \centering
    \begin{subfigure}[c]{0.49\textwidth}
\includegraphics[width=.95\textwidth,page=2]{Figures.pdf}
        \caption{Mass evolution for varying $\alpha$.}
    \end{subfigure}
    \begin{subfigure}[c]{0.49\textwidth}
        \includegraphics[width=.95\textwidth,page=3]{Figures.pdf}
        \caption{Mass evolution for different initial geometries.}
    \end{subfigure}
    \caption{Total mass evolution under both models:
    solid lines correspond to \cref{Eq:Fisher}, dashed lines to \cref{Eq:WrongModel}.}
    \label{fig:mass}
\end{figure} 

\noindent\textbf{Morphological effects.}
Differences in initial geometry (\cref{fig:mass}b) highlight the coupling between interface length and growth rate. 
Configurations with extended interfaces (e.g., multiple circles) exhibit faster early mass accumulation due to their larger effective perimeter, 
while compact morphologies expand more slowly but reach similar steady states.  
This interface-limited proliferation aligns with the classical theory of diffusive tumor fronts \cite{AraujoMcElwain2004,RooseChapmanMaini2007,DrasdoHoehme2005,Frieboes2006,WiseLowengrubFrieboesCristini2008}.

\smallskip
\noindent\textbf{Model comparison.}
Spatial snapshots in \cref{fig:models_alpha05} compare both models for $\alpha=\tfrac12$. 
The consistent model maintains compact aggregates with slower, well-defined interface motion, 
while the Caputo-in-time variant develops diffuse boundaries and reduced saturation ($u\approx1$) behind the front.  
Consequently, the Caputo model reaches domain boundaries faster, reflecting its artificially enhanced front velocity.

\begin{figure}[htb!]
    \centering
    \includegraphics[width=.9\textwidth,page=4]{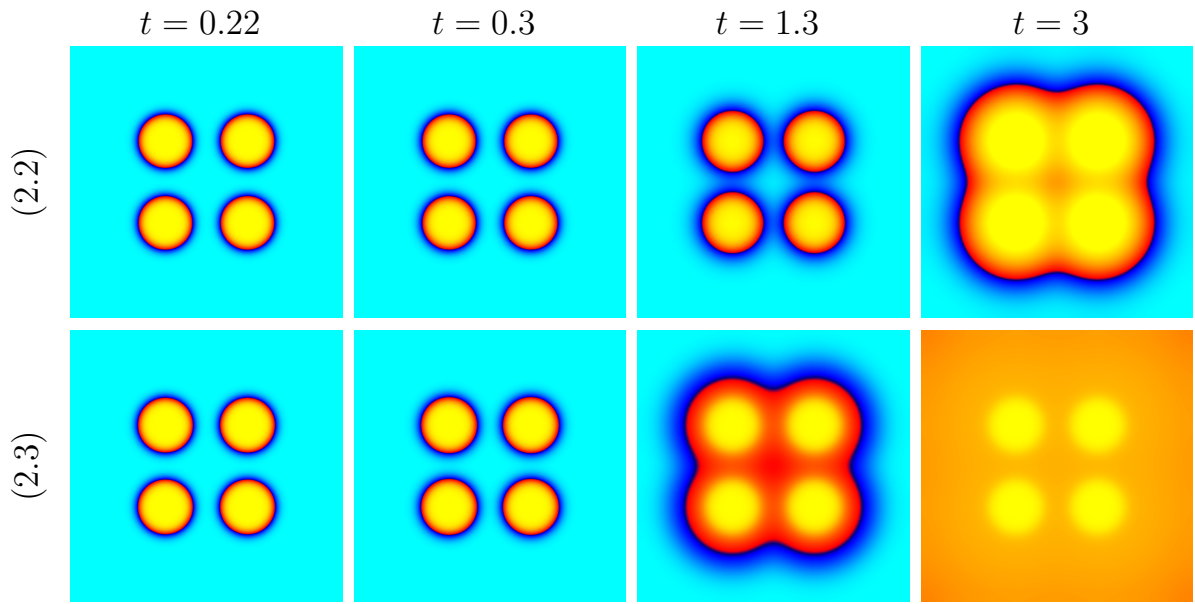}
\vspace{-.3cm}\caption{Comparison of consistent \cref{Eq:Fisher} and Caputo-in-time \cref{Eq:WrongModel} for $\alpha=\tfrac12$ at different times.}
\label{fig:models_alpha05}
\end{figure}

\smallskip
\noindent\textbf{Dependence on fractional order.}
\Cref{fig:alpha_sweep} shows results for the complex initial geometry and varying $\alpha$. 
Decreasing $\alpha$ enhances memory effects, producing thicker interfaces and delayed front motion.  
Although fronts appear broader, this arises from persistent subdiffusive dynamics rather than acceleration.  
For smaller $\alpha$, spatial gradients flatten and the interior saturation decreases, consistent with stronger anomalous trapping.

\begin{figure}[htb!]
    \centering
    \includegraphics[width=.9\textwidth,page=5]{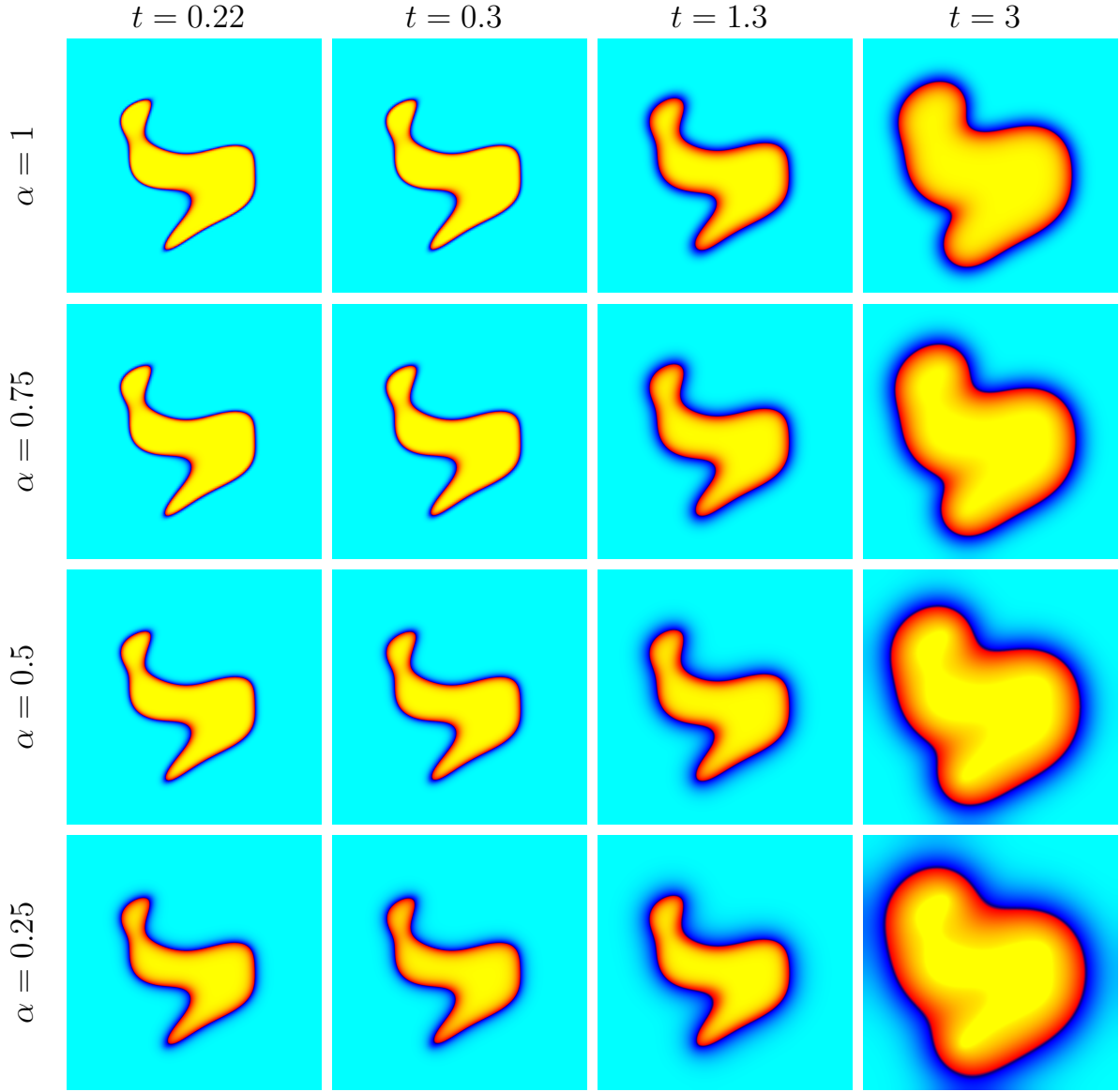}
\vspace{-.3cm}\caption{Evolution of the complex initial condition for fractional orders $\alpha\in\{\tfrac14,\tfrac12,\tfrac34,1\}$.}
\label{fig:alpha_sweep}
\end{figure}

\section{Conclusions}

We have presented a rigorous and computational study of a physically consistent time-fractional Fisher--KPP equation.
Unlike the commonly adopted Caputo-in-time formulation, the present model employs a Riemann--Liouville derivative acting within the diffusion term, in accordance with first-principles derivations from statistical physics \cite{angstmann2020time,heinsalu2007use,weron2008modeling}. 
This distinction, although subtle at the analytical level, leads to qualitatively different dynamical behaviour, both mathematically and numerically.

From an analytical viewpoint, we established local well-posedness via a Galerkin approximation combined with fractional Gronwall-type estimates, and extended the result globally for sufficiently small initial data. 
Our proof relies on a bootstrap argument that controls the nonlinear growth through the dissipative structure of the equation. 
Numerical simulations, performed with graded convolution-quadrature schemes and finite-element spatial discretization, confirmed that the physically justified formulation yields slower, subdiffusive front propagation and realistic saturation dynamics, in contrast to the unphysical acceleration observed in the Caputo-in-time variant.

A central open question concerns the global well-posedness of the consistent fractional Fisher--KPP equation for arbitrary nonnegative initial data \(u_0 \in [0,1]\). 
While smallness of \(\|u_0\|_{L^2}\) ensures global existence, extending this result to general bounded data remains unresolved due to the absence of a suitable weak comparison principle for the Riemann--Liouville formulation. 
Addressing this problem would close a fundamental gap between the mathematically and physically justified models.

Another promising research direction lies in inverse problems and parameter identification, namely the joint estimation of the fractional order and kinetic coefficients from partial or noisy observations.
Recent studies on fractional inverse  \cite{Yan2023inverse} provide an analytical and computational foundation for this line of work and suggest that coupling such estimation with biologically informed Fisher–KPP dynamics could yield new quantitative insights into anomalous tumor growth and subdiffusive transport.

{\small	
	\bibliography{literature.bib}

@article{angstmann2015generalized,
	title        = {Generalized continuous time random walks, master equations, and fractional {F}okker--{P}lanck equations},
	author       = {C. N. Angstmann and I. C. Donnelly and B. I. Henry and T. A.M. Langlands and P. Straka},
	year         = 2015,
	journal      = {SIAM J Appl Math},
	publisher    = {Society for Industrial and Applied Mathematics Publications},
	volume       = 75,
	number       = 4,
	pages        = {1445--1468},
	doi          = {10.1137/15m1011299},
	issue        = 4
}

@book{boyer2012mathematical,
	title        = {Mathematical Tools for the Study of the Incompressible Navier--Stokes Equations and Related Models},
	author       = {Boyer, Franck and Fabrie, Pierre},
	year         = 2012,
	publisher    = {Springer},
	doi          = {10.1007/978-1-4614-5975-0}
}

@book{diethelm2010analysis,
	title        = {{The Analysis of Fractional Differential Equations: An Application-Oriented Exposition using Differential Operators of Caputo type}},
	author       = {Diethelm, Kai},
	year         = 2010,
	publisher    = {Springer},
	doi          = {10.1007/978-3-642-14574-2}
}

@article{fritz2021subdiffusive,
	title        = {{On a subdiffusive tumour growth model with fractional time derivative}},
	author       = {Fritz, Marvin and Kuttler, Christina and Rajendran, Mabel L and Wohlmuth, Barbara and Scarabosio, Laura},
	year         = 2021,
	journal      = {IMA J Appl Math},
	volume       = 86,
	number       = 4,
	pages        = {688--729},
	doi          = {10.1093/imamat/hxab009}
}

@articke{fritz2025wellposedness,
    title={On the well-posedness of a nonlocal kinetic model for dilute polymers with anomalous diffusion},
    author={Fritz,Marvin and S\"uli, Endre and Wohlmuth, Barbara},
    year = 2025,
    journal = {sumitted}
}

@article{heinsalu2007use,
	title        = {Use and abuse of a fractional {F}okker--{P}lanck dynamics for time-dependent driving},
	author       = {E. Heinsalu and M. Patriarca and I. Goychuk and P. Hänggi},
	year         = 2007,
	month        = 9,
	journal      = {Phys Rev Lett},
	publisher    = {APS},
	volume       = 99,
	number       = 12,
	pages        = 120602,
	doi          = {10.1103/PhysRevLett.99.120602},
	issue        = 12
}

@article{metzler2000random,
	title        = {{The random walk's guide to anomalous diffusion: A fractional dynamics approach}},
	author       = {Metzler, Ralf and Klafter, Joseph},
	year         = 2000,
	journal      = {Physics Articles},
	publisher    = {Elsevier},
	volume       = 339,
	number       = 3,
	pages        = {1--77},
	doi          = {10.1016/S0370-1573(00)00070-3}
}

@article{mustapha2022second,
	title        = {A second-order accurate numerical scheme for a time-fractional {F}okker--{P}lanck equation},
	author       = {Kassem Mustapha and Omar M. Knio and Olivier P. Le Maitre},
	year         = 2022,
	month        = 6,
	journal      = {IMA J Numer Anal},
	volume       = 43,
	pages        = {2115--2136},
	doi          = {10.1093/imanum/drac031}
}

@article{pinto2017numerical,
	title        = {Numerical solution of a time-space fractional {F}okker--{P}lanck equation with variable force field and diffusion},
	author       = {Luís Pinto and Ercília Sousa},
	year         = 2017,
	month        = 9,
	journal      = {Commun Nonlinear Sci Numer Simul},
	publisher    = {Elsevier B.V.},
	volume       = 50,
	pages        = {211--228},
	doi          = {10.1016/j.cnsns.2017.03.004}
}

@article{alikhanov2010priori,
	title        = {A priori estimates for solutions of boundary value problems for fractional-order equations},
	author       = {Alikhanov, AA},
	year         = 2010,
doi={10.1134/S0012266110050058},
	journal      = {Differ Equ},
	publisher    = {Springer},
	volume       = 46,
	number       = 5,
	pages        = {660--666}
}

@article{baffet2017kernel,
	title        = {A kernel compression scheme for fractional differential equations},
	author       = {Baffet, Daniel and Hesthaven, Jan S},
	year         = 2017,
	journal      = {SIAM J Numer Anal},
	publisher    = {SIAM},
doi={10.1137/15M1043960},
	volume       = 55,
	number       = 2,
	pages        = {496--520}
}

@article{khristenko2023solving,
	title        = {Solving time-fractional differential equations via rational approximation},
	author       = {Khristenko, Ustim and Wohlmuth, Barbara},
	year         = 2023,
	journal      = {IMA J Numer Anal},
	publisher    = {Oxford University Press},
doi={10.1093/imanum/drac022},
	volume       = 43,
	number       = 3,
	pages        = {1263--1290}
}

@article{dolz2021fast,
	title        = {A fast and oblivious matrix compression algorithm for {V}olterra integral operators},
	author       = {D{\"o}lz, J{\"u}rgen and Egger, Herbert and Shashkov, Vsevolod},
	year         = 2021,
	journal      = {Adv Comput Math},
	publisher    = {Springer},
doi={10.1007/s10444-021-09902-6},
	volume       = 47,
	number       = 6,
	pages        = 81
}

@article{diethelm2020good,
	title        = {Good (and not so good) practices in computational methods for fractional calculus},
	author       = {Diethelm, Kai and Garrappa, Roberto and Stynes, Martin},
	year         = 2020,
	journal      = {Mathematics},
	publisher    = {MDPI},
	volume       = 8,
	number       = 3,
	pages        = 324,
	doi          = {10.3390/math8030324}
}

@article{vergara2008lyapunov,
	title        = {Lyapunov functions and convergence to steady state for differential equations of fractional order},
	author       = {Vicente Vergara and Rico Zacher},
	year         = 2008,
	month        = 6,
doi={10.1007/s00209-007-0225-1},
	journal      = {Math Z},
	publisher    = {Springer},
	volume       = 259,
	pages        = {287--309},
	issue        = 2
}

@article{weron2008modeling,
	title        = {Modeling of subdiffusion in space-time-dependent force fields beyond the fractional {F}okker--{P}lanck equation},
	author       = {Aleksander Weron and Marcin Magdziarz and Karina Weron},
	year         = 2008,
	month        = 3,
	journal      = {Phys Rev E - Statistical, Nonlinear, and Soft Matter Physics},
	publisher    = {APS},
	volume       = 77,
	number       = 3,
	pages        = {036704},
	doi          = {10.1103/PhysRevE.77.036704},
	issue        = 3
}

@article{wittbold2021bounded,
	title        = {{Bounded weak solutions of time-fractional porous medium type and more general nonlinear and degenerate evolutionary integro-differential equations}},
	author       = {Wittbold, Petra and Wolejko, Patryk and Zacher, Rico},
doi={10.1016/j.jmaa.2021.125007},
	year         = 2021,
	journal      = {J Math Anal Appl},
	volume       = 499,
	number       = 1,
	pages        = 125007
}

@article{angstmann2020time,
	title        = {Time Fractional {F}isher--{KPP} and {F}itzhugh--{N}agumo Equations},
	author       = {Angstmann, Christopher N and Henry, Bruce I},
	year         = 2020,
	journal      = {Entropy},
	publisher    = {MDPI},
	volume       = 22,
	number       = 9,
	pages        = 1035,
	doi          = {10.3390/e22101035}
}

@article{rathgeber2016firedrake,
	title        = {Firedrake: {A}utomating the finite element method by composing abstractions},
	author       = {Rathgeber, F. and Ham, D.~A. and Mitchell, L. and Lange, M. and Luporini, F. and McRae, A.~T.~T. and Bercea, G.-T. and Markall, G.~R. and Kelly, P.~H.~J.},
	year         = 2016,
	journal      = {ACM Trans Math Softw},
	volume       = 43,
	pages        = {1--27},
	doi          = {10.1145/2998441},
	_number      = 3
}

@article{fisher1937wave,
	title        = {The wave of advance of advantageous genes},
	author       = {R. A. Fisher},
	year         = 1937,
	journal      = {Ann Eugen},
	publisher    = {Wiley Online Library},
	volume       = 7,
	number       = 4,
	pages        = {355--369},
	doi          = {10.1111/j.1469-1809.1937.tb02153.x}
}

@article{fritz2021equivalence,
	title        = {{Equivalence between a time-fractional and an integer-order gradient flow: The memory effect reflected in the energy}},
	author       = {Marvin Fritz and Ustim Khristenko and Barbara Wohlmuth},
	year         = 2023,
	journal      = {Adv Nonlinear Anal},
	volume       = 12,
	number       = 1,
	pages        = 20220262,
	doi          = {10.1515/anona-2022-0262}
}

@article{fritz2021timefractional,
	title        = {{Time-fractional Cahn--Hilliard equation: Well-posedness, degeneracy, and numerical solutions}},
	author       = {Marvin Fritz and Mabel L. Rajendran and Barbara Wohlmuth},
	year         = 2022,
	journal      = {Comput Math Appl},
	volume       = 108,
	pages        = {66--87},
	doi          = {10.1016/j.camwa.2022.01.002}
}

@article{fritz2024analysis,
	title        = {Analysis of a dilute polymer model with a time-fractional derivative},
	author       = {Fritz, Marvin and S{\"u}li, Endre and Wohlmuth, Barbara},
	year         = 2024,
	journal      = {SIAM J Math Anal},
	publisher    = {SIAM},
	volume       = 56,
	number       = 2,
	pages        = {2063--2089},
	doi          = {10.1137/23M156712X}
}

@article{fritz2024well,
	title        = {Well-posedness and simulation of weak solutions to the time-fractional {Fokker--Planck} equation with general forcing},
	author       = {Fritz, Marvin},
	year         = 2024,
	journal      = {Discrete Contin Dyn Syst Ser -- B},
doi={10.3934/dcdsb.2024036},
	publisher    = {Discrete Contin Dyn Syst Ser-B},
	pages        = {4097--4119},
volume={29}
}

@article{huy2024global,
	title        = {Global solutions to a modified {Fisher--KPP} equation},
	author       = {Huy, Nguyen Dinh and Tuan, Nguyen Anh},
	year         = 2024,
	journal      = {Discrete Contin Dyn Syst Ser-S},
	publisher    = {Discrete Contin Dyn Syst Ser-S},
	volume       = 17,
doi={10.3934/dcdss.2024001},
	number       = 3,
	pages        = {1329--1346}
}

@book{klages2008anomalous,
	title        = {Anomalous Transport: Foundations and Applications},
	author       = {Klages, Rainer and Radons, Guenther and Sokolov, Igor M},
	year         = 2008,
	publisher    = {John Wiley \& Sons}
}

@article{lin2007finite,
	title        = {Finite Difference/Spectral Approximations for the Time-Fractional Diffusion Equation},
	author       = {Y. Lin and C. Xu},
	year         = 2007,
	journal      = {J Comput Phys},
	volume       = 225,
	pages        = {1533--1552},
	doi          = {10.1016/j.jcp.2007.02.001}
}

@article{Lubich86,
	title        = {{Discretized fractional calculus}},
	author       = {Lubich, Ch},
	year         = 1986,
	journal      = {SIAM J Math Anal},
	publisher    = {Siam},
	volume       = 17,
	number       = 3,
	pages        = {704--719},
	doi          = {10.1137/0517050}
}

@article{Lubich88,
	title        = {{Convolution quadrature and discretized operational calculus. I}},
	author       = {Lubich, Christian},
	year         = 1988,
	journal      = {Numer Math},
	publisher    = {Springer},
	volume       = 52,
	pages        = {129--145},
	doi          = {10.1007/BF01398686}
}

@article{rahby2025asymptotic,
	title        = {Asymptotic analysis for time fractional {FitzHugh--Nagumo} equations},
	author       = {Rahby, Ahmed S and Yang, Zhanwen},
doi={10.1007/s12190-024-02344-5},
	year         = 2025,
	journal      = {J Appl Math Comput},
	publisher    = {Springer},
	volume       = 31,
	number       = 1,
	pages        = {1--24},
	note         = {In press}
}

@book{roubicek,
	title        = {{Nonlinear Partial Differential Equations with Applications}},
	author       = {Roub{\'i}{\v{c}}ek, Tom{\'a}{\v{s}}},
	year         = 2013,
	publisher    = {Springer},
	doi          = {10.1007/978-3-0348-0513-1}
}

@book{shigesada1997biological,
	title        = {Biological Invasions: Theory and Practice},
	author       = {Shigesada, Nanako and Kawasaki, Kohkichi},
	year         = 1997,
	publisher    = {Oxford University Press}
}

@article{timsina2021mathematical,
	title        = {A Mathematical Model for Transport and Growth of Microbes in Unsaturated Porous Soil},
	author       = {Ramesh Chandra Timsina and Harihar Khanal and Andrei Ludu and Kedar Nath Uprety},
	year         = 2021,
	journal      = {J Appl Math},
	volume       = 2021,
	pages        = {1--16},
	doi          = {10.1155/2021/6278126}
}

@article{ouaddah2021fractional,
  title={A fractional {B}ihari inequality and some applications to fractional differential equations and stochastic equations},
  author={Ouaddah, A and Henderson, J and Nieto, JJ and Ouahab, A},
  journal={Mediterr J Math},
  volume={18},
doi={10.1007/s00009-021-01917-z},
  number={6},
  pages={242},
  year={2021},
  publisher={Springer}
}

@article{everett2020tutorial,
  title={A tutorial review of mathematical techniques for quantifying tumor heterogeneity},
  author={Everett, Rebecca and B Flores, Kevin and Henscheid, Nick and Lagergren, John and Larripa, Kamila and Li, Ding and T Nardini, John and TT Nguyen, Phuong and Bruce Pitman, E and M Rutter, Erica},
  journal={Math Biosci Eng},
doi={10.3934/mbe.2020207},
pages={3660--3709},
  volume={17},
  number={4},
  year={2020}
}

@article{browning2019bayesian,
  title={A {B}ayesian sequential learning framework to parameterise continuum models of melanoma invasion into human skin},
  author={Browning, Alexander P and Haridas, Parvathi and Simpson, Matthew J},
  journal={Bull Math Biol},
  volume={81},
  number={3},
doi={10.1007/s11538-018-0532-1},
  pages={676--698},
  year={2019},
  publisher={Springer}
}

@inproceedings{simpson2024fisher,
  title={Fisher--{KPP}-type models of biological invasion: {O}pen source computational tools, key concepts and analysis},
  author={Simpson, Matthew J and McCue, Scott W},
  booktitle={Proceedings A},
  volume={480},
  number={2294},
doi={10.1098/rspa.2024.0186},
  pages={20240186},
  year={2024},
  organization={The Royal Society}
}

@book{quittner2019superlinear,
  title={Superlinear Parabolic Problems},
  author={Quittner, Pavol and Souplet, Philippe},
  year={2019},
  publisher={Springer}
}

@article{gunzburger2005modeling,
  title={Modeling and analysis of the forced {F}isher equation},
  author={Gunzburger, MD and Hou, LS and Zhu, W},
  journal={Nonlinear Anal},
  volume={62},
doi={10.1016/j.na.2005.01.094},
  number={1},
  pages={19--40},
  year={2005},
  publisher={Elsevier}
}

@article{jin2019numerical,
  title={Numerical methods for time-fractional evolution equations with nonsmooth data: {A} concise overview},
  author={Jin, Bangti and Lazarov, Raytcho and Zhou, Zhi},
  journal={Comput Methods Appl Mech Eng},
doi={10.1016/j.cma.2018.12.011},
  volume={346},
  pages={332--358},
  year={2019},
  publisher={Elsevier}
}

@article{AraujoMcElwain2004,
  author  = {Araujo, Robyn P. and McElwain, D. L. S.},
  title   = {A History of the Study of Solid Tumour Growth: The Contribution of Mathematical Modelling},
  journal = {Bull Math Biol},
  year    = {2004},
  volume  = {66},
  number  = {5},
  pages   = {1039--1091},
  doi     = {10.1016/j.bulm.2003.11.002}
}

@article{RooseChapmanMaini2007,
  author  = {Roose, Tiina and Chapman, S. Jonathan and Maini, Philip K.},
  title   = {Mathematical Models of Avascular Tumor Growth},
  journal = {SIAM Rev},
  year    = {2007},
  volume  = {49},
  number  = {2},
  pages   = {179--208},
  doi     = {10.1137/S0036144504446291}
}

@article{DrasdoHoehme2005,
  author  = {Drasdo, Dirk and H{\"o}hme, Stefan},
  title   = {A Single-Cell-Based Model of Tumor Growth \emph{in vitro}: Monolayers and Spheroids},
  journal = {Phys Biol},
  year    = {2005},
  volume  = {2},
  number  = {3},
  pages   = {133--147},
  doi     = {10.1088/1478-3975/2/3/001}
}

@article{WiseLowengrubFrieboesCristini2008,
    title        = {{Three-dimensional multispecies nonlinear tumor growth -- {I}: {M}odel and numerical method}},
	author       = {Wise, Steven M and Lowengrub, John S and Frieboes, Hermann B and Cristini, Vittorio},
	year         = 2008,
	journal      = {J Theor Biol},
	publisher    = {Elsevier},
	volume       = 253,
	number       = 3,
	pages        = {524--543},
	doi          = {10.1016/j.jtbi.2008.03.027}
}

@article{Frieboes2006,
  author  = {Frieboes, Hermann B. and Zheng, Xiaoming and Sun, Chung-Ho and Tromberg, Bruce and Gatenby, Robert and Cristini, Vittorio},
  title   = {An Integrated Computational/Experimental Model of Tumor Invasion},
  journal = {Cancer Res},
  year    = {2006},
  volume  = {66},
  number  = {3},
  pages   = {1597--1604},
  doi     = {10.1158/0008-5472.CAN-05-3166}
}

@book{KavallarisSuzuki2018,
  author    = {Nikos I. Kavallaris and Takashi Suzuki},
  title     = {Non-Local Partial Differential Equations for Engineering and Biology: Mathematical Modeling and Analysis},
  series    = {Mathematics for Industry, 31},
  publisher = {Springer International Publishing},
  address   = {Cham},
  year      = {2018},
  isbn      = {978-3-319-67944-0},
  doi       = {10.1007/978-3-319-67944-0}
}

@article{KavallarisLatosSuzuki2023,
  author  = {Kavallaris, Nikos I. and Latos, Evangelos A. and Suzuki, Takashi},
  title   = {Diffusion-Driven Blow-Up for a Nonlocal Fisher-KPP Type Model},
  journal = {SIAM J Math Anal},
  year    = {2023},
  volume  = {55},
  number  = {3},
  pages   = {2411-2433},
  doi     = {10.1137/21M145519X}
}

@book{mainardi2010fractional,
  title={Fractional Calculus and Waves in Linear Viscoelasticity: an Introduction to Mathematical Models},
  author={Mainardi, Francesco},
  year={2010},
  publisher={World Scientific}
}

@article{Rida2010fractional,
  title={On the solutions of time-fractional reaction–diffusion equations},
  author={Rida, S Z and El-Sayed, A M and Arafa, A A},
volume=15,
pages={3847--3854},
doi={10.1016/j.cnsns.2010.02.007},
  journal={Commun Nonlinear Sci Numer Simul},
  year={2010}
}

@article{Baranwal2012analytic,
  title={An analytic algorithm for time fractional nonlinear reaction–diffusion equation based on a new iterative method},
  author={Baranwal, V K and Pandey R K and Tripathi, M P and Singh, O P},
volume=17,
pages={3906--3921},
doi={10.1016/j.cnsns.2012.02.015},
  journal={Commun Nonlinear Sci Numer Simul},
  year={2012}
}

@article{Zheng2022variable,
  title={Analysis and discretization of a variable-order fractional wave equation},
  author={Zheng, X and Wang, H},
  journal={Commun Nonlinear Sci Numer Simul},
doi={10.1016/j.cnsns.2021.106047},
volume=104,
pages=106047,
  year={2022}
}

@article{Zhang2020RungeKutta,
  title={Runge--{K}utta convolution quadrature methods with convergence and stability analysis for nonlinear singular fractional integro-differential equations},
  author={Zhang, G and Zhu, R},
  journal={Commun Nonlinear Sci Numer Simul},
volume=84,
pages=105132,
doi={10.1016/j.cnsns.2019.105132},
  year={2020}
}

@article{Yin2024Hadamard,
  title={Convolution quadrature for Hadamard fractional calculus and correction methods for the subdiffusion with singular source terms},
  author={Yin, B and Zhang, G and Liu, Y and Li, H},
pages=108221,
volume=138,
doi={10.1016/j.cnsns.2024.108221},
  journal={Commun Nonlinear Sci Numer Simul},
  year={2024}
}

@article{BetancurHerrera2020Caputo,
  title={{A numerical method for solving Caputo’s and Riemann-Liouville’s fractional differential equations which includes multi-order fractional derivatives and variable coefficients}},
  author={Betancur-Herrera, D E and Munoz-Galeano, N},
  journal={Commun Nonlinear Sci Numer Simul},
volume=84,
pages=105180,
doi={10.1016/j.cnsns.2020.105180},
  year={2020}
}

@article{Feng2021anomalous,
  title={An investigation of nonlinear time-fractional anomalous diffusion models for simulating transport processes in heterogeneous binary media},
  author={Feng, Libo and Turner, Ian and Perr{\'e}, Patrick and Burrage, Kevin},
  journal={Commun Nonlinear Sci Numer Simul},
doi={10.1016/j.cnsns.2020.105454},
  volume={92},
  pages={105454},
  year={2021},
  publisher={Elsevier}
}

@article{Yan2023inverse,
  title={Identifying a fractional order and a time-dependent coefficient in a time-fractional diffusion wave equation},
  author={Yan, X and Wei, T},
  journal={Commun Nonlinear Sci Numer Simul},
doi={10.1016/j.cam.2022.114995},
volume=424,
pages=114995,
  year={2023}
}

@article{del2013fractional,
  title={Fractional diffusion equations and processes with randomly varying time},
  author={Orsingher, E and Beghin, L},
  journal={Ann Probab},
  volume={37},
  number={1},
  pages={206--249},
doi={10.1214/08-AOP401},
  year={2009}
}

@article{henry2006anomalous,
  title={Anomalous diffusion with linear reaction dynamics: From continuous time random walks to fractional reaction–diffusion equations},
  author={Henry, Bruce I and Langlands, Tracy A M and Wearne, Susan L},
  journal={Phys Rev E},
  volume={74},
doi={10.1103/PhysRevE.74.031116},
  number={3},
  pages={031116},
  year={2006},
  publisher={APS}
}

@article{benson2000application,
  title={Application of a fractional advection–dispersion equation},
  author={Benson, David A and Wheatcraft, Stephen W and Meerschaert, Mark M},
  journal={Water Resour Res},
doi={10.1029/2000WR900031},
  volume={36},
  number={6},
  pages={1403--1412},
  year={2000},
  publisher={Wiley Online Library}
}

@article{magin2006fractional,
  title={Fractional calculus in bioengineering, {P}art 1},
  author={Magin, Richard L},
  journal={Crit Rev Biomed Eng},
doi={10.1615/CritRevBiomedEng.v32.i1.10},
  volume={32},
  number={1},
  pages={1--104},
  year={2006},
  publisher={Begell House}
}
	\bibliographystyle{abbrv} }

\end{document}